\theoremstyle{plain}
\newtheorem{definition}{Definition}
\newtheorem{thm}[definition]{Theorem}
\newtheorem{lem}[definition]{Lemma}
\newtheorem{cor}[definition]{Corollary}
\newtheorem{rei}[definition]{Example}
\def\G{\Gamma}
\def\g{\gamma}
\def\Pe{L}
\DeclareMathOperator{\Ex}{{E}}
\DeclareMathOperator{\Va}{{V}}
\DeclareMathOperator{\Erf}{{erf}}
\DeclareMathOperator{\Sgn}{{sgn}}
\begin{document}
\title[]{Minimizing the expected value of the asymmetric loss and an inequality of the variance of the loss}
\author[]{Naoya Yamaguchi, Yuka Yamaguchi, and Ryuei Nishii}
\date{\today}
\keywords{asymmetric loss function; risk function; minimizing the expectation value; generalized Gauss distribution; gamma function. }
\subjclass[2010]{Primary 62A99; Secondary 62E99.}

\maketitle

\begin{abstract}
For some estimations and predictions, 
we solve minimization problems with asymmetric loss functions. 
Usually, we estimate the coefficient of regression for these problems. 
In this paper, we do not make such the estimation, 
but rather give a solution by correcting any predictions so that the prediction error follows a general normal distribution. 
In our method, we can not only minimize the expected value of the asymmetric loss, 
but also lower the variance of the loss. 
\end{abstract}

\section{Introduction}\label{S1}
%It is important to make predictions. 
%For example, 
%The weather prediction is important in power generation, transportation, military, etc.
%However, 
For some estimations and predictions, 
we solve minimization problems with loss functions, as follows: 
Let $\{ (x_{i}, y_{i}) \mid 1 \leq i \leq  n \}$ be a data set, 
where $x_{i}$ are $1 \times p$ vectors and $y_{i} \in \mathbb{R}$. 
We assume that the data relate to a linear model, 
$$
y = X \beta + \varepsilon, 
$$
where $y = {}^{t}(y_{1}, \ldots, y_{n})$, $\varepsilon = {}^{t}(\varepsilon_{1}, \ldots, \varepsilon_{n})$, 
and $X$ is the $n \times p$ matrix having $x_{i}$ as the $i$th row. 
Let $L$ be a loss function and let $r_{i}(\beta) := y_{i} - x_{i} \beta$. 
Then we estimate the value: 
\begin{align*}
\hat{\beta} := \arg\min_{\beta} \left\{ \sum_{i = 1}^{n} L(r_{i}(\beta)) \right\}. 
\end{align*}
The case of $L(r_{i}(\beta)) = r_{i}(\beta)^{2}$ is well-known (see, e.g., Refs.~\cite{doi:10.1111/j.1751-5823.1998.tb00406.x}, \cite{legendre1805nouvelles}, and  \cite{stigler1981}). 
In the case of an asymmetric loss function, 
we refer the reader to, e.g., Refs.~\cite{10.2307/2336317}, \cite{10.2307/24303995}, \cite{10.2307/1913643}, and \cite{10.2307/2289234}. 
These studies estimate the parameter~$\hat{\beta}$. 
In this paper, however, we do not make such the estimation, 
but instead give a solution to the minimization problems by correcting any predictions so that the prediction error follows a general normal distribution. 
In our method, we can not only minimize the expected value of the asymmetric loss, 
but also lower the variance of the loss. 

Let $y$ be an observation value, and let $\hat{y}$ be a predicted value of $y$. 
We derive the optimized predicted value $y^{*} = \hat{y} + C$ minimizing the expected value of the loss under the assumption: 
\begin{enumerate}
\item 
The prediction error $z := \hat{y} - y$ is the realized value of a random variable $Z$, 
whose density function is a generalized Gaussian distribution function (see, e.g., Refs.~\cite{Dytso2018}, \cite{doi:10.1080/02664760500079464}, and \cite{Sub23}) with mean zero 
\begin{align*}
f_{Z}(z) := \frac{1}{2 a b \G(a)} \exp{\left( - \left\lvert \frac{z}{b} \right\rvert^{\frac{1}{a}} \right)}, 
\end{align*}
where $\G(a)$ is the gamma function and $a$, $b \in \mathbb{R}_{> 0}$. 
\item Let $k_{1}$, $k_{2} \in \mathbb{R}_{> 0}$. 
If there is a mismatch between $y$ and $\hat{y}$, then we suffer a loss, 
\begin{align*}
\Pe(z) := 
\begin{cases}
k_{1} z, & z \geq 0, \\ 
- k_{2} z, & z < 0. 
\end{cases}
\end{align*}
\end{enumerate}
That is, the solution to the minimization problem is 
\begin{align*}
C = \arg\min_{c} \left\{ \Ex\left[ \Pe(Z + c) \right] \right\}. 
\end{align*}

The motivation of our research is as follows: 
(1) Predictions usually cause prediction errors. 
Therefore, it is necessary to use predictions in consideration of predictions errors. 
Actually, in some cases, it is best not to act as predicted because of prediction errors.
For example, 
the paper~\cite{Yamaguchi2018} formulates a method for minimizing the expected value of the procurement cost of electricity in two popular spot markets: {\it day-ahead} and {\it intra-day}, 
under the assumption that the expected value of the unit prices and the distributions of the prediction errors for the electricity demand traded in two markets are known. 
The paper showed that if the procurement is increased or decreased from the prediction, 
in some cases, the expected value of the procurement cost is reduced.
(2) In recent years, prediction methods have been black boxed by the big data and machine learning (see, e.g., Ref.~\cite{10.1145/3236009}). 
The day will soon come, when we must minimize the objective function by using predictions obtained by such black boxed methods.
In our method, 
even if we do not know the prediction $\hat{y}$, 
we can determine the parameter $C$ if we know the prediction error distribution $f$ and asymmetric loss function $L$.

To obtain $y^{*}$, we derive $\Ex[\Pe(Z + c)]$ for any $c \in \mathbb{R}$. 
Let $\G(a, x)$ and  $\g(a, x)$ be the upper and the lower incomplete gamma functions, respectively (see, e.g., Ref.~\cite{doi:10.1142/0653}). 
The expected value and the variance of $\Pe(Z + c)$ are as follows: 
\begin{lem}\label{lem:1.1}
For any $c \in \mathbb{R}$, we have 
\begin{align*}
(1)\quad 
\Ex[\Pe(Z + c)] 
&= \frac{(k_{1} - k_{2}) c}{2} 
+ \frac{(k_{1} + k_{2}) \lvert c \rvert}{2 \G(a)} 
\g\left(a, \left\lvert \frac{c}{b} \right\rvert^{\frac{1}{a}} \right) 
+ \frac{(k_{1} + k_{2}) b}{2 \G(a)} 
\G\left(2a, \left\lvert \frac{c}{b} \right\rvert^{\frac{1}{a}} \right), \\
(2)\quad 
\Va[\Pe(Z + c)] 
&= \frac{(k_{1} + k_{2})^{2} c^{2}}{4} 
+ \frac{(k_{1}^{2} - k_{2}^{2}) b c}{2 \G(a)} 
\G\left(2a, \left\lvert \frac{c}{b} \right\rvert^{\frac{1}{a}} \right) \nonumber \\
&\quad - \frac{(k_{1} + k_{2})^{2} b \lvert c \rvert}{2 \G(a)^{2}}
\g\left(a, \left\lvert \frac{c}{b} \right\rvert^{\frac{1}{a}} \right)
\G\left(2a, \left\lvert \frac{c}{b} \right\rvert^{\frac{1}{a}} \right) \nonumber \\
&\quad - \frac{(k_{1} + k_{2})^{2} c^{2}}{4 \G(a)^{2}}
\g\left(a, \left\lvert \frac{c}{b} \right\rvert^{\frac{1}{a}} \right)^{2} 
- \frac{(k_{1} + k_{2})^{2} b^{2}}{4 \G(a)^{2}}
\G\left(2a, \left\lvert \frac{c}{b} \right\rvert^{\frac{1}{a}} \right)^{2} \nonumber \\
&\quad + \frac{(k_{1}^{2} + k_{2}^{2}) b^{2} \G(3a)}{2 \G(a)}  
+ \Sgn(c) \frac{(k_{1}^{2} - k_{2}^{2}) b^{2}}{2 \G(a)} 
\g\left(3a, \left\lvert \frac{c}{b} \right\rvert^{\frac{1}{a}} \right). \nonumber
\end{align*}
\end{lem}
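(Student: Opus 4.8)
The plan is to reduce both moments to elementary partial moments of $Z$ by exploiting a decomposition of $\Pe$ into a linear part and an absolute-value part. First I would record the pointwise identities
\begin{align*}
\Pe(w) &= \frac{k_{1} - k_{2}}{2}\, w + \frac{k_{1} + k_{2}}{2}\, \lvert w \rvert, \\
\Pe(w)^{2} &= \frac{k_{1}^{2} + k_{2}^{2}}{2}\, w^{2} + \frac{k_{1}^{2} - k_{2}^{2}}{2}\, w \lvert w \rvert,
\end{align*}
which one checks on the two sign regimes $w \geq 0$ and $w < 0$. Setting $w = Z + c$ and using linearity of expectation, part~(1) reduces to computing $\Ex[Z]$ and $\Ex[\lvert Z + c \rvert]$, while part~(2) additionally requires $\Ex[(Z + c)^{2}]$ and $\Ex[(Z + c)\lvert Z + c \rvert]$, together with the square of the quantity found in part~(1).

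The engine for all of these is the substitution $t = \lvert z / b \rvert^{1/a}$, i.e.\ $z = b t^{a}$, which converts $\int_{0}^{s} z^{n} \exp(-(z/b)^{1/a})\, dz$ into $a b^{n+1} \g\!\left(a(n+1), (s/b)^{1/a}\right)$ and its upper-tail analogue into $a b^{n+1} \G\!\left(a(n+1), (s/b)^{1/a}\right)$. From this I would read off the full moments $\Ex[Z] = 0$ and $\Ex[Z^{2}] = b^{2} \G(3a) / \G(a)$ (using the symmetry $f_{Z}(-z) = f_{Z}(z)$), and the truncated moments $\int_{-c}^{\infty} z^{j} f_{Z}(z)\, dz$ for $j = 0, 1, 2$. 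Because the density carries $\lvert z \rvert$, each truncated integral must be split at the origin and the two regimes $c \geq 0$ and $c < 0$ treated separately; in both regimes the result collapses to the same closed form once written in terms of $\lvert c \rvert$ and $\lvert c/b \rvert^{1/a}$, with the single exception of the $\g(3a, \cdot)$ contribution, which flips sign between the two regimes and thereby produces the $\Sgn(c)$ factor in part~(2).

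For the expectation, the identity $\int_{-\infty}^{\infty}(z + c) f_{Z}(z)\, dz = c$ lets me write $\Ex[\lvert Z + c \rvert] = 2\int_{-c}^{\infty}(z + c) f_{Z}(z)\, dz - c$, and inserting the truncated moments for $j = 0, 1$ gives part~(1) directly. For the variance I would use the parallel identity $\Ex[(Z + c)\lvert Z + c \rvert] = 2\int_{-c}^{\infty}(z + c)^{2} f_{Z}(z)\, dz - \Ex[(Z + c)^{2}]$, assemble $\Ex[\Pe(Z + c)^{2}]$ from the two decomposition coefficients, and finally subtract $\Ex[\Pe(Z + c)]^{2}$ as obtained from part~(1).

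The main obstacle I anticipate is the final algebraic consolidation in part~(2): expanding $\Ex[\Pe(Z + c)]^{2}$ produces three squares and three cross products, and the stated formula emerges only after two cancellations. The term proportional to $c^{2}\,\g(a, \lvert c/b\rvert^{1/a})$ arising from $\Ex[\Pe(Z+c)^{2}]$ exactly cancels the cross product between the linear and absolute-value parts of $\Ex[\Pe(Z+c)]$, while the coefficient of $bc\,\G(2a, \lvert c/b\rvert^{1/a})$ is halved once the corresponding cross product of the square is subtracted. Keeping the sign identity $c\lvert c\rvert = \Sgn(c)\,c^{2}$ consistent across these steps, simultaneously for both signs of $c$, is the delicate bookkeeping; everything else is routine manipulation of $\G$ and $\g$.
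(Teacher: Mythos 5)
Your proposal is correct: the decomposition identities $\Pe(w) = \tfrac{k_{1}-k_{2}}{2}w + \tfrac{k_{1}+k_{2}}{2}\lvert w\rvert$ and $\Pe(w)^{2} = \tfrac{k_{1}^{2}+k_{2}^{2}}{2}w^{2} + \tfrac{k_{1}^{2}-k_{2}^{2}}{2}w\lvert w\rvert$ check out on both sign regimes, the reflection identities $\Ex[\lvert Z+c\rvert] = 2\int_{-c}^{\infty}(z+c)f_{Z}(z)\,dz - c$ and its quadratic analogue follow from $\Ex[Z]=0$ and symmetry of $f_{Z}$, and the substitution $z = bt^{a}$ turns every truncated moment into the stated incomplete gamma functions. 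I verified that assembling these pieces reproduces both formulas, including the cancellations you flag (the $\Sgn(c)\,c^{2}\g(a,\cdot)$ term against the cross product $c\lvert c\rvert\,\g(a,\cdot)$ via $c\lvert c\rvert = \Sgn(c)c^{2}$, and the halving of the $bc\,\G(2a,\cdot)$ coefficient). The paper instead integrates $\Pe(z+c)f_{Z}(z)$ directly, splitting at $z=-c$ according to the piecewise definition of $\Pe$ (so the $k_{1}$ and $k_{2}$ pieces are carried separately), then case-splitting on $\Sgn(c)$ and again at the origin before applying the same substitution; it then subtracts $\Ex[\Pe(Z+c)]^{2}$ exactly as you do. The two routes share the same computational engine and the same unavoidable casework on $\Sgn(c)$, but your odd/even split of $\Pe$ makes the structure of the answer more transparent --- the $\tfrac{(k_{1}-k_{2})c}{2}$ term and the uniform $(k_{1}+k_{2})$ coefficients in part (1) appear for free, and the only genuinely asymmetric contribution, the $\g(3a,\cdot)$ term carrying $\Sgn(c)$, is isolated in the single quantity $\Ex[(Z+c)\lvert Z+c\rvert]$. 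The paper's version is more pedestrian but requires no preliminary identities to be checked.
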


We write the value of $c$ satisfying $\frac{d}{dc} \Ex[\Pe(Z + c)] = 0$ as $C$. 
Then, we find that $\Ex[\Pe(Z + c)]$ has a minimum value at $c = C$. 
Also, it follows from  
\begin{align}
\g\left(a, \left\lvert \frac{C}{b} \right\rvert^{\frac{1}{a}} \right) 
= \Sgn(C) \frac{k_{2} - k_{1}}{k_{1} + k_{2}} \G(a) 
\end{align}
that $\Sgn(C) = \Sgn(k_{2} - k_{1})$, 
where $\Sgn(c) := 1 \: (c \geq 0); -1 \: (c < 0)$, 
and $C = 0$ only when $k_{1} = k_{2}$. 
This equation implies that the ratio of $\G(a)$ and 
$\g\left(a, \left\lvert \frac{C}{b} \right\rvert^{\frac{1}{a}} \right)$ is 
$1 : \frac{\lvert k_{2} - k_{1}\rvert}{k_{1} + k_{2}}$. 
That is, the vertical axis $t = \left\lvert \frac{C}{b} \right\rvert^{\frac{1}{a}}$ divides the area between $t^{a - 1} e^{- t}$ and the $t$-axis into $\frac{\lvert k_{2} - k_{1}\rvert}{k_{1} + k_{2}} : 1- \frac{\lvert k_{2} - k_{1}\rvert}{k_{1} + k_{2}}$. 

Substituting $c = C$ in the equation $(1)$ of Lemma~$\ref{lem:1.1}$, 
from the equation~$(3)$, 
we have 
\begin{align*}
\Ex[\Pe(Z + C)] 
= \frac{(k_{1} + k_{2}) b}{2 \G(a)} 
\G\left(2a, \left\lvert \frac{C}{b} \right\rvert^{\frac{1}{a}} \right). 
\end{align*}
This is the minimum value of $\Ex[\Pe(Z + c)]$. 
From this and the $c = 0$ case of the equation $(1)$ of Lemma~$\ref{lem:1.1}$, we have the following corollary: 

\begin{cor}\label{cor:1.2}
We have 
\begin{align*}
\Ex[\Pe(Z)] - \Ex[\Pe(Z + C)] 
&= \frac{(k_{1} + k_{2}) b}{2 \G(a)} 
\g\left(2a, \left\lvert \frac{C}{b} \right\rvert^{\frac{1}{a}} \right), \\ 
\frac{\Ex[\Pe(Z + C)]}{\Ex[\Pe(Z)]} 
&= \frac{1}{\G(2a)} \G\left(2a, \left\lvert \frac{C}{b} \right\rvert^{\frac{1}{a}} \right). 
\end{align*}
\end{cor}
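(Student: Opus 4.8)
The plan is to derive both identities by direct substitution, drawing on two inputs that are already available: the closed form $\Ex[\Pe(Z + C)] = \frac{(k_{1} + k_{2}) b}{2 \G(a)} \G\!\left(2a, \lvert C/b \rvert^{1/a}\right)$ for the minimum value, recalled immediately before the corollary, together with the $c = 0$ specialization of equation $(1)$ in Lemma~\ref{lem:1.1}. The only external fact required is the additivity relation $\G(s) = \g(s, x) + \G(s, x)$ linking the complete, lower, and upper incomplete gamma functions.

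First I would compute $\Ex[\Pe(Z)]$ by putting $c = 0$ in equation $(1)$. Since both $c$ and $\lvert c \rvert$ vanish, the first two summands disappear, and the surviving term involves $\G(2a, 0)$. Using $\G(2a, 0) = \G(2a)$ — the upper incomplete gamma function with lower endpoint $0$ reduces to the complete gamma function — this yields $\Ex[\Pe(Z)] = \frac{(k_{1} + k_{2}) b}{2 \G(a)} \G(2a)$.

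For the first identity I would subtract the minimum value from this expression and factor out the common constant $\frac{(k_{1} + k_{2}) b}{2 \G(a)}$, which leaves $\G(2a) - \G\!\left(2a, \lvert C/b \rvert^{1/a}\right)$ inside the brackets; by the additivity relation this difference equals $\g\!\left(2a, \lvert C/b \rvert^{1/a}\right)$, matching the claimed right-hand side. For the ratio, the common factor $\frac{(k_{1} + k_{2}) b}{2 \G(a)}$ cancels between numerator and denominator, leaving $\G\!\left(2a, \lvert C/b \rvert^{1/a}\right) / \G(2a)$, which is exactly the second identity.

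The argument is essentially algebraic once these two inputs are assembled, so I expect no substantial obstacle; the single point demanding care is the boundary evaluation $\G(2a, 0) = \G(2a)$, which is what converts the $c = 0$ case of Lemma~\ref{lem:1.1} into the clean expression for $\Ex[\Pe(Z)]$ that drives both computations.
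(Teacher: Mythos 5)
Your proposal is correct and follows exactly the paper's route: the paper likewise obtains $\Ex[\Pe(Z)]$ from the $c=0$ case of Lemma~\ref{lem:1.1} (recorded as equation~(\ref{E[L(Z)]})), takes the displayed closed form for $\Ex[\Pe(Z+C)]$ as the starting point, and reduces both identities to the additivity relation $\G(2a)=\g(2a,x)+\G(2a,x)$ from Lemma~\ref{lem:3.0}. No gaps; the boundary evaluation $\G(2a,0)=\G(2a)$ you flag is indeed the only point of care and is covered by Lemma~\ref{lem:3.0}(3).
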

This corollary asserts that the expected value of the loss is reduced by correcting a predicted value $y$ to the optimized predicted value $y^{*}$. 
Moreover, the following holds: 

\begin{thm}\label{thm:1.3}
We have 
\begin{align*}
 \Va[\Pe(Z + C)] \leq \Va[\Pe(Z)], 
\end{align*}
where equality sign holds only when $C = 0$; that is, when $k_{1} = k_{2}$.  
\end{thm}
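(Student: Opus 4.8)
The plan is to prove the inequality by evaluating the variance formula of Lemma~\ref{lem:1.1}(2) at the two arguments $c=0$ and $c=C$, and then reducing the comparison to a single-variable inequality in $t := \lvert C/b \rvert^{1/a}$. First I would compute $\Va[\Pe(Z)]$ by putting $c=0$: since $\g(a,0)=0$ and $\g(3a,0)=0$, every term carrying a factor $c$ or $\lvert c\rvert$ drops, leaving only
\[
\Va[\Pe(Z)] = \frac{(k_{1}^{2}+k_{2}^{2})\,b^{2}\,\G(3a)}{2\G(a)} - \frac{(k_{1}+k_{2})^{2}\,b^{2}\,\G(2a)^{2}}{4\G(a)^{2}}.
\]
(As a consistency check, for $k_{1}=k_{2}=k$ this equals $k^{2}\,\Va[\,\lvert Z\rvert\,]$ computed from $\Ex[\lvert Z\rvert^{n}]=b^{n}\G((n+1)a)/\G(a)$.) Next I would evaluate $\Va[\Pe(Z+C)]$ by putting $c=C$ and using the relation characterizing $C$, namely $\g(a,t)=\Sgn(C)\frac{k_{2}-k_{1}}{k_{1}+k_{2}}\G(a)=\frac{\lvert k_{2}-k_{1}\rvert}{k_{1}+k_{2}}\G(a)$ (equation~$(3)$, together with $\Sgn(C)=\Sgn(k_{2}-k_{1})$), to eliminate every occurrence of $\g(a,t)$ in favour of the loss parameters.

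The key algebraic simplifications I expect are: the two terms of $\Va[\Pe(Z+C)]$ linear in $\G(2a,t)$ become \emph{equal} after substitution (using $\lvert C\rvert = b\,t^{a}$ and $\Sgn(C)(k_{1}^{2}-k_{2}^{2})=-\lvert k_{2}-k_{1}\rvert(k_{1}+k_{2})$) and so combine; the two terms in $C^{2}$ combine through $(k_{1}+k_{2})^{2}-(k_{2}-k_{1})^{2}=4k_{1}k_{2}$; and the $\G(3a)$ contributions cancel against $\Va[\Pe(Z)]$. Writing $m := \lvert k_{2}-k_{1}\rvert/(k_{1}+k_{2}) = \g(a,t)/\G(a)\in[0,1)$ to absorb $k_{1},k_{2}$, the entire comparison collapses to
\[
\Va[\Pe(Z)] - \Va[\Pe(Z+C)] = \frac{(k_{1}+k_{2})^{2}\,b^{2}}{4\G(a)^{2}}\,\Delta(t),
\]
where
\[
\Delta(t) := \G(2a,t)^{2} - \G(2a)^{2} + 4\g(a,t)\,t^{a}\,\G(2a,t) + 2\g(a,t)\g(3a,t) - t^{2a}\bigl(\G(a)^{2}-\g(a,t)^{2}\bigr).
\]
Since the prefactor is positive, it remains to show $\Delta(t)\ge 0$ for all $t\ge 0$, with equality only at $t=0$; and as $(k_{1},k_{2})$ vary, $m$ ranges over $[0,1)$ so that $t$ ranges over $[0,\infty)$, hence it suffices to establish this for the fixed function $\Delta$ on all of $[0,\infty)$.

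I would prove $\Delta\ge 0$ by monotonicity. One has $\Delta(0)=0$, and differentiating via $\frac{d}{dt}\g(s,t)=t^{s-1}e^{-t}$ and $\frac{d}{dt}\G(s,t)=-t^{s-1}e^{-t}$, the terms proportional to $t^{3a-1}e^{-t}\g(a,t)$ cancel, leaving
\[
\Delta'(t) = 2t^{a-1}B(t), \qquad B(t) := t^{a}e^{-t}\G(2a,t) + 2a\,\g(a,t)\G(2a,t) + e^{-t}\g(3a,t) - a\,t^{a}\bigl(\G(a)^{2}-\g(a,t)^{2}\bigr).
\]
Everything then reduces to $B(t)\ge 0$. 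The hard part will be exactly this inequality: one checks $B(0)=0$ and $B(t)\to 0$ as $t\to\infty$, so endpoint values alone do not settle the sign, and the three manifestly positive terms must be shown to dominate the negative term $-a\,t^{a}\G(a,t)\bigl(\G(a)+\g(a,t)\bigr)$ uniformly on $(0,\infty)$. I would attack this through the integral representations of $\g$ and $\G$, writing each product as a double integral and comparing integrands, together with log-convexity of $\G$ (which already gives the correct sign of the limit $\Delta(\infty)=2\G(a)\G(3a)-\G(2a)^{2}\ge \G(2a)^{2}>0$). As a sanity check the case $a=1$ is fully explicit: there $B(t)=2e^{-t}\bigl(2-(2+t)e^{-t}\bigr)\ge 0$, vanishing only at $t=0$, which confirms both the reduction and the equality condition $C=0 \iff k_{1}=k_{2}$.
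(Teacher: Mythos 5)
Your reduction is correct and is in fact identical to the paper's: after substituting $c=C$ and using equation~$(3)$ to eliminate $\g\left(a,t\right)$, the paper also arrives at
\begin{align*}
\Va[\Pe(Z)]-\Va[\Pe(Z+C)]=\frac{(k_{1}+k_{2})^{2}b^{2}}{4\G(a)^{2}}\,f(a,t),\qquad t=\left\lvert \tfrac{C}{b}\right\rvert^{\frac{1}{a}},
\end{align*}
where $f(a,t)$ is exactly your $\Delta(t)$, and your computation $\Delta'(t)=2t^{a-1}B(t)$ with the cancellation of the $t^{3a-1}e^{-t}\g(a,t)$ terms matches the paper's derivative of $f$. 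The problem is that everything you have done up to this point is the routine part; the entire content of the theorem is the inequality $B(t)\ge 0$, and your proposal does not prove it. You correctly observe that $B(0)=0$ and $B(t)\to 0$ as $t\to\infty$, so no endpoint or limiting argument suffices, and you then only gesture at ``double integral representations and log-convexity of $\G$'' without carrying anything out. Verifying $a=1$ is a sanity check, not a proof for general $a>0$.

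For comparison, the paper isolates the essential part of $B$ as Lemma~$\ref{lem:1.4}$: $x^{a}\g(a,x)^{2}-x^{a}\G(a)^{2}+2\g(a,x)\G(2a,x)>0$ (your $B$ equals $a$ times this expression plus two manifestly positive terms, so the two formulations are equivalent in difficulty). Proving that lemma occupies all of Section~$5$ and is genuinely delicate: it requires a four-level chain of auxiliary functions $y_{1},\dots,y_{4}$ whose derivatives control one another's signs, sign tables split into the cases $0<a<1$ and $a\ge 1$, and, crucially, the boundary value $\lim_{x\to 0+}y_{2}(a,x)=2\G(2a)-a\G(a)^{2}$, whose positivity (Lemma~$\ref{lem:1.5}$) is itself nontrivial --- the paper proves it via the Legendre duplication formula and the inequality $4^{a}\G(a+\frac{1}{2})>\sqrt{\pi}\,\G(a+1)$, which in turn needs the digamma series and the identity $\sum_{n\ge 1}\frac{1}{n(2n-1)}=2\log 2$. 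None of this machinery, nor any workable substitute for it, appears in your proposal, so the proof as written has a genuine gap precisely where the theorem becomes hard.
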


This theorem asserts that the variance of the loss is reduced by correcting the predicted value $y$ to the optimized predicted value $y^{*}$. 
To prove this theorem, we use the following lemma: 
\begin{lem}\label{lem:1.4}
For $a > 0$ and $x > 0$, 
we have 
\begin{align*}
x^{a} \g(a, x)^{2} - x^{a} \G(a)^{2} + 2 \g(a, x) \G(2a, x) > 0. 
\end{align*}
\end{lem}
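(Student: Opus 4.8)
The plan is to reduce the inequality to a monotonicity analysis of a single auxiliary function. Write $F(x)$ for the left-hand side,
\[
F(x) := x^{a}\g(a,x)^{2} - x^{a}\G(a)^{2} + 2\g(a,x)\G(2a,x),
\]
and observe that $F(0^{+}) = 0$ and $\lim_{x\to\infty}F(x) = 0$: indeed $\g(a,x)\to\G(a)$ and $\G(2a,x)\to0$, while $x^{a}\bigl(\g(a,x)^{2}-\G(a)^{2}\bigr) = -x^{a}\G(a,x)\bigl(\G(a)+\g(a,x)\bigr)\to0$ because $x^{a}\G(a,x)$ decays like $x^{2a-1}e^{-x}$. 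Thus it suffices to prove that $F$ is strictly positive in the interior, which will follow once we show that $F$ first strictly increases and then strictly decreases on $(0,\infty)$.

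First I would differentiate. Using $\frac{d}{dx}\g(a,x)=x^{a-1}e^{-x}$ and $\frac{d}{dx}\G(2a,x)=-x^{2a-1}e^{-x}$, the two terms containing $x^{2a-1}e^{-x}\g(a,x)$ cancel, leaving $F'(x)=x^{a-1}H(x)$ with
\[
H(x) := a\bigl(\g(a,x)^{2}-\G(a)^{2}\bigr) + 2e^{-x}\G(2a,x).
\]
Since $x^{a-1}>0$, the sign of $F'$ is that of $H$. The boundary behaviour is $H(0^{+}) = 2\G(2a) - a\G(a)^{2}$ and $H(x)\to0^{-}$ as $x\to\infty$ (the first summand, asymptotic to $-2a\G(a)x^{a-1}e^{-x}$, dominates the second, which is of order $x^{2a-1}e^{-2x}$). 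The positivity $H(0^{+})>0$ I would obtain from Legendre's duplication formula: it is equivalent to $2^{2a}\G(a+\tfrac12)>\sqrt{\pi}\,\G(a+1)$, and the ratio $\rho(a)=2^{2a}\G(a+\tfrac12)/(\sqrt{\pi}\,\G(a+1))$ satisfies $\rho(0)=1$, $(\log\rho)'(0)=0$, and $(\log\rho)''(a)=\psi'(a+\tfrac12)-\psi'(a+1)>0$ since $\psi'$ is strictly decreasing, whence $\rho(a)>1$ for $a>0$.

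The heart of the argument is to show that $H$ changes sign exactly once. For this I would prove the \emph{downcrossing property}: $H'(x)<0$ at every zero of $H$. Together with $H(0^{+})>0$ and $H(\infty)=0^{-}$ this forces a unique sign change, from $+$ to $-$, hence the unimodality of $F$ and finally $F>0$ on $(0,\infty)$. Differentiating, $H'(x)=2e^{-x}\bigl(a\g(a,x)x^{a-1}-\G(2a,x)-x^{2a-1}e^{-x}\bigr)$; substituting the relation $2e^{-x}\G(2a,x)=a\bigl(\G(a)^{2}-\g(a,x)^{2}\bigr)$ valid at a zero of $H$, the condition $H'<0$ is equivalent to
\begin{equation*}
\G(a,x)\bigl(\G(a)+\g(a,x)\bigr) + \tfrac{2}{a}x^{2a-1}e^{-2x} > 2\g(a,x)\,x^{a-1}e^{-x} . \tag{$\dagger$}
\end{equation*}
I would in fact establish $(\dagger)$ for all $x>0$. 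When $a\ge1$ it is immediate: $\G(a,x)\ge x^{a-1}e^{-x}$ (because $t^{a-1}\ge x^{a-1}$ for $t\ge x$), so the right-hand side is at most $2\g(a,x)\G(a,x)\le\G(a,x)\bigl(\G(a)+\g(a,x)\bigr)$, which is itself a lower bound for the left-hand side.

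The main obstacle is the range $0<a<1$, where $\G(a,x)<x^{a-1}e^{-x}$ and the $x^{2a-1}e^{-2x}$ term is genuinely needed. Here I would normalise by $\G(a)^{2}$ and pass to the Gamma$(a)$ law: with $p=\g(a,x)/\G(a)$, $f=x^{a-1}e^{-x}/\G(a)$ and $p_{1}=\g(a+1,x)/\G(a+1)$, the recurrence $\g(a+1,x)=a\g(a,x)-x^{a}e^{-x}$ turns $(\dagger)$ into $1-p^{2}-2fp_{1}>0$, equivalently the quadratic in the density $f$,
\[
\tfrac{2x}{a}f^{2} - 2pf + (1-p^{2}) > 0 .
\]
The delicate point is that near $x=0$ this is asymptotically tight, the slack being exactly the $\G(a)^{2}$ coming from $\G(a,x)^{2}$, so crude bounds are insufficient; I expect to close it using sharp two-sided estimates for the incomplete gamma function (or the integration-by-parts identity $x^{a-1}e^{-x}-\G(a,x)=(1-a)\G(a-1,x)$ together with a comparison that retains the $\G(a,x)^{2}$ contribution rather than discarding it). Once $(\dagger)$ is secured for $0<a<1$, the downcrossing property, the unimodality of $F$, and hence the lemma all follow.
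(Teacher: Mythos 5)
Your overall frame coincides with the paper's: you set $F$ equal to the left-hand side, use $F(0^{+})=\lim_{x\to\infty}F(x)=0$, factor $F'(x)=x^{a-1}H(x)$ with $H(x)=a\bigl(\gamma(a,x)^{2}-\Gamma(a)^{2}\bigr)+2e^{-x}\Gamma(2a,x)$, and reduce the lemma to showing that $H$ changes sign exactly once, from $+$ to $-$. Your proof that $H(0^{+})=2\Gamma(2a)-a\Gamma(a)^{2}>0$ (duplication formula plus convexity of $\log\rho$ via the trigamma function) is correct and is in fact tidier than the paper's route through Lemmas 1.5 and 1.6, and your treatment of $(\dagger)$ for $a\ge 1$ checks out.

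The gap is the claim that $(\dagger)$ can be established for all $x>0$ when $0<a<1$: it is false there for large $x$. Writing the left side minus the right side of $(\dagger)$ as
\begin{align*}
2\Gamma(a)\bigl(\Gamma(a,x)-x^{a-1}e^{-x}\bigr)+\Gamma(a,x)\bigl(2x^{a-1}e^{-x}-\Gamma(a,x)\bigr)+\tfrac{2}{a}\,x^{2a-1}e^{-2x},
\end{align*}
the first term equals $2(a-1)\Gamma(a)\Gamma(a-1,x)\sim 2(a-1)\Gamma(a)x^{a-2}e^{-x}$, which is negative for $a<1$ and dominates the remaining $O\bigl(x^{2a-2}e^{-2x}\bigr)+O\bigl(x^{2a-1}e^{-2x}\bigr)$ terms as $x\to\infty$; numerically, at $a=\tfrac12$, $x=10$ the left side of $(\dagger)$ is about $4.83\times 10^{-5}$ and the right side about $5.09\times 10^{-5}$. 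So the delicate regime is not only near $x=0$ as you suggest: $(\dagger)$ genuinely fails at infinity. This is forced, because $H'(x)=2e^{-x}\bigl(ax^{a-1}\gamma(a,x)-\Gamma(2a,x)-x^{2a-1}e^{-x}\bigr)$ is eventually positive (as $H$ climbs back to $0^{-}$), so no inequality valid for all $x$ can encode $H'<0$. The downcrossing property itself is true, but only because the unique zero of $H$ lies strictly to the left of the point where $H'$ changes sign; establishing that localization is precisely the content the paper supplies by differentiating twice more (its $y_{3}$ and $y_{4}$, shown negative-then-positive and positive-then-negative by elementary first-derivative tests and the limits collected in its Tables 1--4). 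To complete your argument for $0<a<1$ you would need either that extra layer of differentiation or a replacement for $(\dagger)$ that is only asserted on the set where $H$ can vanish.
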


To prove Lemma~$\ref{lem:1.4}$, we use the following lemmas: 
\begin{lem}\label{lem:1.5}
For $a > 0$, 
we have 
\begin{align*}
2 \G(2a) - a \G(a)^{2} > 0. 
\end{align*}
\end{lem}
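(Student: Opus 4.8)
The plan is to reduce Lemma~\ref{lem:1.5} to a one‑variable monotonicity statement by means of the Legendre duplication formula, and then to exploit the fact that the inequality is only asymptotically tight as $a \to 0^{+}$, so that a strict (rather than crude) estimate is unavoidable. First I would apply the duplication formula $2\G(2a) = \frac{2^{2a}}{\sqrt{\pi}}\,\G(a)\,\G\!\left(a+\tfrac12\right)$ and factor out $\G(a)>0$. Using $a\G(a)=\G(a+1)$, the claim $2\G(2a)-a\G(a)^{2}>0$ becomes
\begin{align*}
\frac{2^{2a}}{\sqrt{\pi}}\,\G\!\left(a+\tfrac12\right) > \G(a+1),
\end{align*}
i.e.\ $\phi(a):=\dfrac{2^{2a}\,\G\!\left(a+\frac12\right)}{\sqrt{\pi}\,\G(a+1)}>1$ for every $a>0$. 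The boundary value is exactly $\phi(0)=1$, since $\G\!\left(\tfrac12\right)=\sqrt{\pi}$ and $\G(1)=1$.

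Next I would establish that $\phi$ is strictly increasing on $(0,\infty)$, which together with $\phi(0)=1$ forces $\phi(a)>1$ for $a>0$. Taking the logarithmic derivative and writing $\psi=(\log\G)'$ for the digamma function gives $\frac{d}{da}\log\phi(a)=2\log 2+\psi\!\left(a+\tfrac12\right)-\psi(a+1)$, so it suffices to show $D(a):=\psi(a+1)-\psi\!\left(a+\tfrac12\right)<2\log 2$ for all $a>0$. Here I would use the standard series $D(a)=\sum_{n\ge 0}\frac{1/2}{\left(n+a+\frac12\right)(n+a+1)}$, in which every summand is positive and strictly decreasing in $a$; hence $D$ is strictly decreasing. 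Combined with the known value $D(0)=\psi(1)-\psi\!\left(\tfrac12\right)=2\log 2$ (the Leibniz series), this yields $D(a)<2\log 2$ for $a>0$, whence $\log\phi$ is strictly increasing and $\phi(a)>\phi(0)=1$. This strictness for all $a>0$ is consistent with equality occurring only in the degenerate limit.

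The step I expect to be the main obstacle is precisely this last point: because $\phi(0)=1$ and, in fact, $\frac{d}{da}\log\phi(a)\big|_{a=0}=2\log 2-D(0)=0$, the inequality is asymptotically sharp at the left endpoint, so no order-of-magnitude or monotone-comparison estimate of $\G(a+1)$ against $\frac{2^{2a}}{\sqrt\pi}\G(a+\frac12)$ can succeed on its own. One must pin down the \emph{exact} value $D(0)=2\log 2$ and invoke the \emph{strict} decrease of $D$ (equivalently, the strict monotonicity of the trigamma function $\psi'$, since $D'(a)=\psi'(a+1)-\psi'(a+\tfrac12)<0$). As an alternative that avoids the digamma function, I would reformulate the inequality through the Beta function as $a\,B(a,a)<2$ using $B(a,a)=\G(a)^2/\G(2a)$, and prove that $a\mapsto a\,B(a,a)$ decreases from its limit $2$ at $a=0^{+}$; but this route leads back to the same derivative computation, so I would present the duplication-plus-monotonicity argument as the primary proof.
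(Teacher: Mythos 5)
Your proposal is correct and follows essentially the same route as the paper: reduce via the Legendre duplication formula to the inequality $4^{a}\G\left(a+\tfrac12\right)>\sqrt{\pi}\,\G(a+1)$ (the paper's Lemma~\ref{lem:1.6}), then show the ratio is strictly increasing from its value $1$ at $a=0$ by computing the logarithmic derivative with the digamma series and using the exact endpoint value $\psi(1)-\psi\!\left(\tfrac12\right)=\sum_{n\ge1}\frac{1}{n(2n-1)}=2\log 2$ (the paper's Lemma~\ref{lem:4-1-2}). The only quibble is a naming slip: the identity $D(0)=2\log 2$ comes from the alternating harmonic (Mercator) series, not the Leibniz series.
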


\begin{lem}\label{lem:1.6}
For $a > 0$, 
we have 
\begin{align*}
4^{a} \G\left(a + \frac{1}{2} \right) > \sqrt{\pi} \G(a + 1). 
\end{align*}
\end{lem}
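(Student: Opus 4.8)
The plan is to reduce the stated inequality to Lemma~\ref{lem:1.5} by means of Legendre's duplication formula for the gamma function, namely $\G(a)\,\G\!\left(a+\frac{1}{2}\right) = 2^{1-2a}\sqrt{\pi}\,\G(2a)$.

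First I would solve this identity for $\G\!\left(a+\frac{1}{2}\right)$, obtaining $\G\!\left(a+\frac{1}{2}\right) = 2^{1-2a}\sqrt{\pi}\,\G(2a)/\G(a)$. Substituting into the left-hand side of the claim and using $4^{a} = 2^{2a}$ collapses the powers of two to a single factor of $2$, since $2^{2a}\cdot 2^{1-2a} = 2$; this gives $4^{a}\,\G\!\left(a+\frac{1}{2}\right) = 2\sqrt{\pi}\,\G(2a)/\G(a)$. On the right-hand side I would use $\G(a+1) = a\,\G(a)$, so that $\sqrt{\pi}\,\G(a+1) = \sqrt{\pi}\,a\,\G(a)$.

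With both sides rewritten in this form, the inequality $4^{a}\,\G\!\left(a+\frac{1}{2}\right) > \sqrt{\pi}\,\G(a+1)$ becomes, after cancelling $\sqrt{\pi}$ and multiplying through by the positive quantity $\G(a)$, exactly $2\,\G(2a) > a\,\G(a)^{2}$, which is the content of Lemma~\ref{lem:1.5}. Because every step is reversible and $\G(a) > 0$ for $a > 0$, the two inequalities are equivalent, and the claim follows at once from Lemma~\ref{lem:1.5}.

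There is essentially no obstacle here beyond invoking the duplication formula with the correct normalization; the only points requiring care are tracking the powers of two and confirming that dividing by $\G(a)$ preserves the strict inequality, which it does since $\G$ is strictly positive on $(0,\infty)$. Should one prefer a route independent of Lemma~\ref{lem:1.5}, one may instead set $g(a) := 4^{a}\,\G\!\left(a+\frac{1}{2}\right)/\G(a+1)$, observe that $g(a) \to \sqrt{\pi}$ as $a \to 0^{+}$, and show that $\frac{d}{da}\log g(a) = \log 4 + \psi\!\left(a+\frac{1}{2}\right) - \psi(a+1)$ is positive for $a > 0$; this in turn follows because the expression vanishes at $a = 0$ and has positive derivative $\psi'\!\left(a+\frac{1}{2}\right) - \psi'(a+1) > 0$ by the monotonicity of the trigamma function $\psi'$.
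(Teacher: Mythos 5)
Your primary argument is circular in the context of this paper: you propose to deduce Lemma~\ref{lem:1.6} from Lemma~\ref{lem:1.5} via the duplication formula, but the paper establishes Lemma~\ref{lem:1.5} (that $2\G(2a) - a\G(a)^{2} > 0$) by exactly the reverse reduction --- the duplication formula rewrites $2\G(2a) - a\G(a)^{2}$ as $\frac{1}{\sqrt{\pi}}\G(a)\{4^{a}\G(a+\frac{1}{2}) - \sqrt{\pi}\,\G(a+1)\}$, and positivity is then read off from Lemma~\ref{lem:1.6}. In this paper Lemma~\ref{lem:1.6} is the primitive statement, so it cannot be obtained from Lemma~\ref{lem:1.5} without supplying an independent proof of the latter. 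Your algebra is correct and the two inequalities are indeed equivalent, but as the proof of Lemma~\ref{lem:1.6} this reduction cannot stand.

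Your fallback route, however, is correct and self-contained, and it is essentially the paper's own proof: both consider $g(a) = 4^{a}\G(a+\frac{1}{2})/(\sqrt{\pi}\,\G(a+1))$ (up to the constant $\sqrt{\pi}$ in your normalization) and show that its logarithmic derivative $\log 4 + \psi(a+\frac{1}{2}) - \psi(a+1)$ is positive for $a>0$. The difference lies in how that positivity is obtained. The paper expands the digamma function as a series, bounds it term by term, and then needs the exact evaluation $\sum_{n \ge 1} \frac{1}{n(2n-1)} = 2\log 2$, which it proves separately via a Riemann sum (Lemma~\ref{lem:4-1-2}). You instead note that the expression vanishes at $a=0$ (since $\psi(\frac{1}{2}) - \psi(1) = -2\log 2$) and is increasing in $a$ because the trigamma function is decreasing, so $\psi'(a+\frac{1}{2}) - \psi'(a+1) > 0$. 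Your version is shorter and dispenses with the series evaluation entirely; promote it to the main argument and drop the reduction to Lemma~\ref{lem:1.5}.
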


The remainder of this paper is organized as follows. 
In Section~$2$, we set up the problem. 
In Section~$3$, we introduce the expected value and the variance of $\Pe(Z + c)$, 
and we determine the value of $c = C$ that gives the minimum value of $\Ex[\Pe(Z + c)]$. 
In addition, we give a geometrical interpretation of the parameter $C$, 
and give the minimized expected value $\Ex[\Pe(Z + C)]$. 
In Section~$4$, we prove Theorem~$\ref{thm:1.3}$. 
In Section~$5$, we give some inequalities for the gamma and the incomplete gamma functions, 
which used to derive the inequality for the variance of the loss in Theorem~$\ref{thm:1.3}$. 
In Section~$6$, we write the calculation of the expected value and the variance of the loss $\Pe(Z + c)$ for $c \in \mathbb{R}$.

\section{Problem statement}
In this section, we set a problem. 
Let $y$ be an observation value, 
let $\hat{y}$ be a predicted value of $y$, 
and let $\G(a)$ be the gamma function (see, e.g., Ref.~\cite[p.~93]{doi:10.1142/0653}) defined by 
\begin{align*}
\G(a) := \int_{0}^{+\infty} t^{a - 1} e^{- t} dt, \quad \text{Re}(a) > 0. 
\end{align*}
We assume the following: 
\begin{enumerate}
\item 
The prediction error $z := \hat{y} - y$ is the realized value of a random variable $Z$, 
whose density function is a generalized Gaussian distribution function (see, e.g., Refs.~\cite{Dytso2018}, \cite{doi:10.1080/02664760500079464}, and \cite{Sub23}) with mean zero 
\begin{align*}
f_{Z}(z) := \frac{1}{2 a b \G(a)} \exp{\left( - \left\lvert \frac{z}{b} \right\rvert^{\frac{1}{a}} \right)}, 
\end{align*}
where $a$, $b \in \mathbb{R}_{> 0}$. 
\item Let $k_{1}$, $k_{2} \in \mathbb{R}_{> 0}$. 
If there is a mismatch between $y$ and $\hat{y}$, then we suffer a loss, 
\begin{align*}
\Pe(z) := 
\begin{cases}
k_{1} z, & z \geq 0, \\ 
- k_{2} z, & z < 0. 
\end{cases}
\end{align*}
\end{enumerate}
We derive the optimized predicted value $y^{*} = \hat{y} + C$ minimizing $\Ex[\Pe(Z + c)]$. 
For this purpose, we derive $\Ex[\Pe(Z + c)]$ for any $c \in \mathbb{R}$ in the next section.

\section{Expected value and variance of the loss}
Here, we introduce the expected value and the variance of $\Pe(Z + c)$, 
and determine the value of $c = C$ that gives the minimum value of $\Ex[\Pe(Z + c)]$. 
In addition, we give a geometrical interpretation of the parameter $C$ and give the minimized expected value $\Ex[\Pe(Z + C)]$. 

\subsection{Expected value and variance of the loss}
%We introduce the expected value and the variance of $\Pe(Z + c)$. 
Let $\G(a, x)$ and  $\g(a, x)$ be the upper and the lower incomplete gamma functions, respectively, defined by 
\begin{align*}
\G(a, x) := \int_{x}^{+\infty} t^{a - 1} e^{-t} dt, \qquad 
\g(a, x) := \int_{0}^{x} t^{a - 1} e^{-t} dt, 
\end{align*}
where $\text{Re}(a) > 0$ and $x \geq 0$. 
These functions have the following properties: 
\begin{lem}\label{lem:3.0}
For ${\rm Re}(a) > 0$ and $x \geq 0$, 
\begin{flalign*}\quad
&(1)\quad \g(a, x) + \G(a, x) = \G(a); \\
&(2)\quad \lim_{x \to \infty} \g(a, x) = \G(a); \\
&(3)\quad \G(a, 0) = \G(a); \\
&(4)\quad \frac{d}{dx} \g(a, x) = x^{a - 1} e^{-x}; \\
&(5)\quad \frac{d}{dx} \G(a, x) = - x^{a - 1} e^{-x}. 
&
\end{flalign*}
\end{lem}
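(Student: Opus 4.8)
The plan is to derive all five identities directly from the integral definitions of $\g(a,x)$ and $\G(a,x)$ given just above the statement, together with the fundamental theorem of calculus; no substantial argument is required. The whole content of the lemma is bookkeeping with convergent improper integrals, so I would keep the proof short and organize it by deducing the later parts from the earlier ones wherever that is cleaner.

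For part $(1)$ I would simply add the two defining integrals and invoke additivity of the integral across the partition $[0,+\infty) = [0,x]\cup[x,+\infty)$:
\begin{align*}
\g(a,x) + \G(a,x) = \int_{0}^{x} t^{a-1} e^{-t}\,dt + \int_{x}^{+\infty} t^{a-1} e^{-t}\,dt = \int_{0}^{+\infty} t^{a-1} e^{-t}\,dt = \G(a).
\end{align*}
Convergence of each piece for $\mathrm{Re}(a)>0$ is inherited from convergence of the $\G$-integral itself, so the split is legitimate. Part $(3)$ is then the special case obtained by setting $x=0$ in the definition of $\G(a,x)$, which returns the full integral $\int_{0}^{+\infty} t^{a-1} e^{-t}\,dt = \G(a)$.

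Part $(2)$ I would deduce from part $(1)$ rather than prove from scratch. Rewriting the identity as $\g(a,x) = \G(a) - \G(a,x)$ and noting that $\G(a,x) = \int_{x}^{+\infty} t^{a-1} e^{-t}\,dt$ is the tail of a convergent integral, hence tends to $0$ as $x\to+\infty$, gives $\lim_{x\to\infty}\g(a,x) = \G(a)$. Finally, parts $(4)$ and $(5)$ are immediate applications of the fundamental theorem of calculus: the integrand $t^{a-1} e^{-t}$ is continuous in $t$ on $(0,+\infty)$, so differentiating $\g(a,x)$ with respect to its upper limit gives $\tfrac{d}{dx}\g(a,x) = x^{a-1} e^{-x}$, while differentiating $\G(a,x)$ with respect to its lower limit introduces the sign, yielding $\tfrac{d}{dx}\G(a,x) = -x^{a-1} e^{-x}$; alternatively, $(5)$ follows at once by differentiating $\G(a,x) = \G(a) - \g(a,x)$ from part $(1)$ and using part $(4)$.

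Honestly there is no genuine obstacle here, since every step is a textbook manipulation. The only point deserving a word of care is the convergence of the improper integrals both at $t=0$ (relevant when $0<\mathrm{Re}(a)<1$, where the integrand is singular) and at $t=+\infty$; both hold throughout the range $\mathrm{Re}(a)>0$, which is exactly why the definitions make sense in the first place and why the splitting in part $(1)$ and the limit in part $(2)$ are valid.
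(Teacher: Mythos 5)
Your proof is correct and is exactly the standard argument; the paper itself states this lemma without proof, treating these as textbook properties of the incomplete gamma functions, so your derivation from the integral definitions, additivity over $[0,x]\cup[x,+\infty)$, the vanishing tail, and the fundamental theorem of calculus simply fills in what the authors left implicit. Nothing is missing.
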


Also, for $c \in \mathbb{R}$, 
let $\Sgn(c) := 1 \: (c \geq 0); -1 \: (c < 0)$. 
Then, the expected value and the variance of $\Pe(Z + c)$ are as follows: 
\begin{lem}[Section~$\ref{S1}$, Lemma~$\ref{lem:1.1}$]\label{lem:3.1}
For any $c \in \mathbb{R}$, we have 
\begin{align*}
(1)\quad 
\Ex[\Pe(Z + c)] 
&= \frac{(k_{1} - k_{2}) c}{2} 
+ \frac{(k_{1} + k_{2}) \lvert c \rvert}{2 \G(a)} 
\g\left(a, \left\lvert \frac{c}{b} \right\rvert^{\frac{1}{a}} \right) 
+ \frac{(k_{1} + k_{2}) b}{2 \G(a)} 
\G\left(2a, \left\lvert \frac{c}{b} \right\rvert^{\frac{1}{a}} \right), \\
(2)\quad 
\Va[\Pe(Z + c)] 
&= \frac{(k_{1} + k_{2})^{2} c^{2}}{4} 
+ \frac{(k_{1}^{2} - k_{2}^{2}) b c}{2 \G(a)} 
\G\left(2a, \left\lvert \frac{c}{b} \right\rvert^{\frac{1}{a}} \right) \nonumber \\
&\quad - \frac{(k_{1} + k_{2})^{2} b \lvert c \rvert}{2 \G(a)^{2}}
\g\left(a, \left\lvert \frac{c}{b} \right\rvert^{\frac{1}{a}} \right)
\G\left(2a, \left\lvert \frac{c}{b} \right\rvert^{\frac{1}{a}} \right) \nonumber \\
&\quad - \frac{(k_{1} + k_{2})^{2} c^{2}}{4 \G(a)^{2}}
\g\left(a, \left\lvert \frac{c}{b} \right\rvert^{\frac{1}{a}} \right)^{2} 
- \frac{(k_{1} + k_{2})^{2} b^{2}}{4 \G(a)^{2}}
\G\left(2a, \left\lvert \frac{c}{b} \right\rvert^{\frac{1}{a}} \right)^{2} \nonumber \\
&\quad + \frac{(k_{1}^{2} + k_{2}^{2}) b^{2} \G(3a)}{2 \G(a)}  
+ \Sgn(c) \frac{(k_{1}^{2} - k_{2}^{2}) b^{2}}{2 \G(a)} 
\g\left(3a, \left\lvert \frac{c}{b} \right\rvert^{\frac{1}{a}} \right). \nonumber
\end{align*}
\end{lem}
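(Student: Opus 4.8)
The plan is to reduce both moments to elementary gamma integrals by exploiting the branchwise identity
\begin{align*}
\Pe(w) = \frac{k_{1} + k_{2}}{2} \lvert w \rvert + \frac{k_{1} - k_{2}}{2} w,
\end{align*}
which one verifies separately on $w \geq 0$ and $w < 0$. Taking $w = Z + c$ and using that $Z$ has mean zero, so $\Ex[Z + c] = c$, part~(1) reduces immediately to computing $\Ex[\lvert Z + c \rvert]$: the leading term $\frac{(k_{1} - k_{2}) c}{2}$ in the statement is exactly $\frac{k_{1} - k_{2}}{2} \Ex[Z + c]$.

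First I would evaluate $\Ex[\lvert Z + c \rvert] = \int_{-\infty}^{\infty} \lvert z + c \rvert f_{Z}(z)\, dz$ by splitting at the breakpoint $z = -c$, where $z + c$ changes sign, and further splitting at the symmetry point $z = 0$ of $f_{Z}$. On each piece I would apply the substitution $u = (\lvert z \rvert / b)^{1/a}$, i.e.\ $\lvert z \rvert = b u^{a}$, which turns $f_{Z}(z)\, dz$ into $\frac{1}{2 \G(a)} u^{a-1} e^{-u}\, du$ and converts a factor $\lvert z \rvert$ into $b u^{a}$, raising the exponent $a - 1$ to $2a - 1$. The outer tail over $[0, \infty)$ then contributes a complete $\G(2a)$, while the interval between $0$ and $-c$ and the far tail contribute $\g(2a, \cdot)$ and $\G(2a, \cdot)$ pieces; invoking $\g(a, x) + \G(a, x) = \G(a)$ from Lemma~\ref{lem:3.0} collapses these to $\frac{\lvert c \rvert}{\G(a)} \g(a, \lvert c/b \rvert^{1/a}) + \frac{b}{\G(a)} \G(2a, \lvert c/b \rvert^{1/a})$, with $\lvert c \rvert$ and the evenness in $c$ coming from the constant term of $z + c$ and the symmetry of $f_{Z}$. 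Multiplying by $\frac{k_{1} + k_{2}}{2}$ yields part~(1).

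For part~(2) I would start from the analogous quadratic identity
\begin{align*}
\Pe(w)^{2} = \frac{k_{1}^{2} + k_{2}^{2}}{2} w^{2} + \frac{k_{1}^{2} - k_{2}^{2}}{2} \Sgn(w)\, w^{2},
\end{align*}
again checked branchwise. With $w = Z + c$ this splits $\Ex[\Pe(Z + c)^{2}]$ into $\Ex[(Z + c)^{2}]$ and $\Ex[\Sgn(Z + c)(Z + c)^{2}]$. The first equals $c^{2} + \Ex[Z^{2}]$, and the same substitution gives $\Ex[Z^{2}] = b^{2} \G(3a) / \G(a)$, which accounts for the $\G(3a)$ term; the second is obtained by splitting at $z = -c$ and substituting, where now the factor $(z + c)^{2}$ pushes the exponent to $3a - 1$ and produces the $\g(3a, \lvert c/b \rvert^{1/a})$ term together with the $\Sgn(c)$ prefactor. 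Finally I would subtract $\Ex[\Pe(Z + c)]^{2}$, obtained by squaring part~(1); expanding that square produces exactly the $\g(a, \cdot)^{2}$, $\g(a, \cdot) \G(2a, \cdot)$, and $\G(2a, \cdot)^{2}$ cross terms appearing in the stated variance, and combining them with the second-moment contributions gives the remaining $c^{2}$ and $bc\,\G(2a, \cdot)$ terms.

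I expect no single integral to be difficult; the routine but delicate part is the bookkeeping. Because the breakpoint $z = -c$ is offset from the symmetry point $z = 0$ of $f_{Z}$, every computation must track the contribution of the interval between $0$ and $-c$ separately from the two outer tails, and the cases $c \geq 0$ and $c < 0$ must be reconciled into a single formula carrying $\lvert c \rvert$ and $\Sgn(c)$. The main obstacle will therefore be organizing the many incomplete gamma pieces—repeatedly applying $\g(a, x) + \G(a, x) = \G(a)$ of Lemma~\ref{lem:3.0}—so that they collapse to the compact closed forms asserted in the statement.
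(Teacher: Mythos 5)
Your proposal is correct and follows essentially the same route as the paper: both compute $\Ex[\Pe(Z+c)]$ and $\Ex[\Pe(Z+c)^{2}]$ by splitting the integral at the kink $z=-c$ and at the symmetry point $z=0$, substituting $t=(\lvert z\rvert/b)^{1/a}$ to produce complete and incomplete gamma functions, reconciling the cases $c\geq 0$ and $c<0$ into one formula via $\lvert c\rvert$ and $\Sgn(c)$, and finally forming $\Va[\Pe(Z+c)]=\Ex[\Pe(Z+c)^{2}]-\Ex[\Pe(Z+c)]^{2}$. The one difference is your preliminary even/odd decomposition $\Pe(w)=\tfrac{k_{1}+k_{2}}{2}\lvert w\rvert+\tfrac{k_{1}-k_{2}}{2}w$ and its quadratic analogue, which the paper does not use (it carries $k_{1},k_{2}$ through the split integrals and recombines at the end); this is a bookkeeping convenience rather than a different argument, though it does deliver the $\tfrac{(k_{1}-k_{2})c}{2}$ and $\G(3a)$ terms for free from $\Ex[Z]=0$ and $\Ex[Z^{2}]=b^{2}\G(3a)/\G(a)$.
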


See the last two sections for the proof of Lemma~$\ref{lem:3.1}$.

From Lemma~$\ref{lem:3.1}$, we have the following: 
\begin{align}
\Ex[\Pe(Z)] 
&= \frac{(k_{1} + k_{2}) b}{2 \G(a)} 
\G\left( 2a \right),  \label{E[L(Z)]} \\ 
\Va[\Pe(Z)] 
&= \frac{(k_{1}^{2} + k_{2}^{2}) b^{2} \G(3a)}{2 \G(a)} 
- \frac{(k_{1} + k_{2})^{2} b^{2} \G(2a)^{2}}{4 \G(a)^{2}}. \label{V[L(Z)]}
\end{align}

Let $\Erf(x)$ be the error function defined by  
\begin{align*}
\Erf(x) := \frac{2}{\sqrt{\pi}} \int_{0}^{x} \exp{\left( - t^{2} \right)} dt 
\end{align*}
for any $x \in \mathbb{R}$. 
We give two examples of $\Ex[\Pe(Z + c)]$ and $\Va[\Pe(Z + c)]$. 
\begin{rei}\label{rei:3.2}
In the case of ${\rm Laplace}(0, b)$, since $a = 1$, we have 
\begin{align*}
\Ex[\Pe(Z + c)] 
&= \Pe(c) + \frac{(k_{1} + k_{2}) b}{2} \exp{\left(- \left\lvert \frac{c}{b} \right\rvert \right)}, \\ 
\Va[\Pe(Z + c)]
&= \left\{ k_{1}^{2} + k_{2}^{2} + \Sgn(c) (k_{1}^{2} - k_{2}^{2} ) \right\} b^{2} \\ 
&\quad - \Sgn(c) (k_{1} + k_{2}) \left\{ \Pe(c) + b (k_{1} - k_{2}) \right\} b \exp{\left(- \left\lvert \frac{c}{b} \right\rvert \right)} \\ 
&\quad - \frac{(k_{1} + k_{2})^{2} b^{2}}{4} \exp{\left(- 2 \left\lvert \frac{c}{b} \right\rvert \right)}. 
\end{align*}
In the case of $\mathcal{N}(0, \frac{1}{2} b^{2})$, 
since $a = \frac{1}{2}$, we have
\begin{align*}
\Ex[\Pe(Z + c)] 
&= \frac{(k_{1} - k_{2}) c}{2}  
+ \frac{(k_{1} + k_{2}) c}{2} \Erf{\left(\frac{c}{b} \right)} 
+ \frac{(k_{1} + k_{2}) b}{2 \sqrt{\pi}} \exp{\left(- \frac{c^{2}}{b^{2}} \right)}, \\ 
\Va[\Pe(Z + c)] 
&= \frac{(k_{1}^{2} + k_{2}^{2}) b^{2} }{4} 
+ \frac{(k_{1} + k_{2})^{2} c^{2} }{4} 
+ \frac{(k_{1}^{2} - k_{2}^{2}) b^{2} }{4} \Erf{\left( \frac{c}{b} \right)} 
- \frac{(k_{1} + k_{2})^{2} c^{2} }{4} \Erf^{2}{\left( \frac{c}{b} \right)} \\ 
&\quad - \frac{(k_{1} + k_{2})^{2} b c}{2 \sqrt{\pi}} \Erf{\left(\frac{c}{b} \right)} \exp{\left( - \frac{c^{2}}{b^{2}} \right)} 
- \frac{(k_{1} + k_{2})^{2} b^{2} }{4 \pi} \exp{\left( - \frac{2 c^{2}}{b^{2}} \right)}.  
\end{align*}
With the conditions fixed as $k_{1} = 50$ and $b = 1$, 
we can plot $\Ex[\Pe(Z)]$ and $\Va[\Pe(Z)]$ for the Laplace and the Gauss distributions as follows: 
\begin{figure}[h!]
\begin{tabular}{cc}
\begin{minipage}{0.5\hsize}
\begin{center}
\includegraphics[height=4cm]{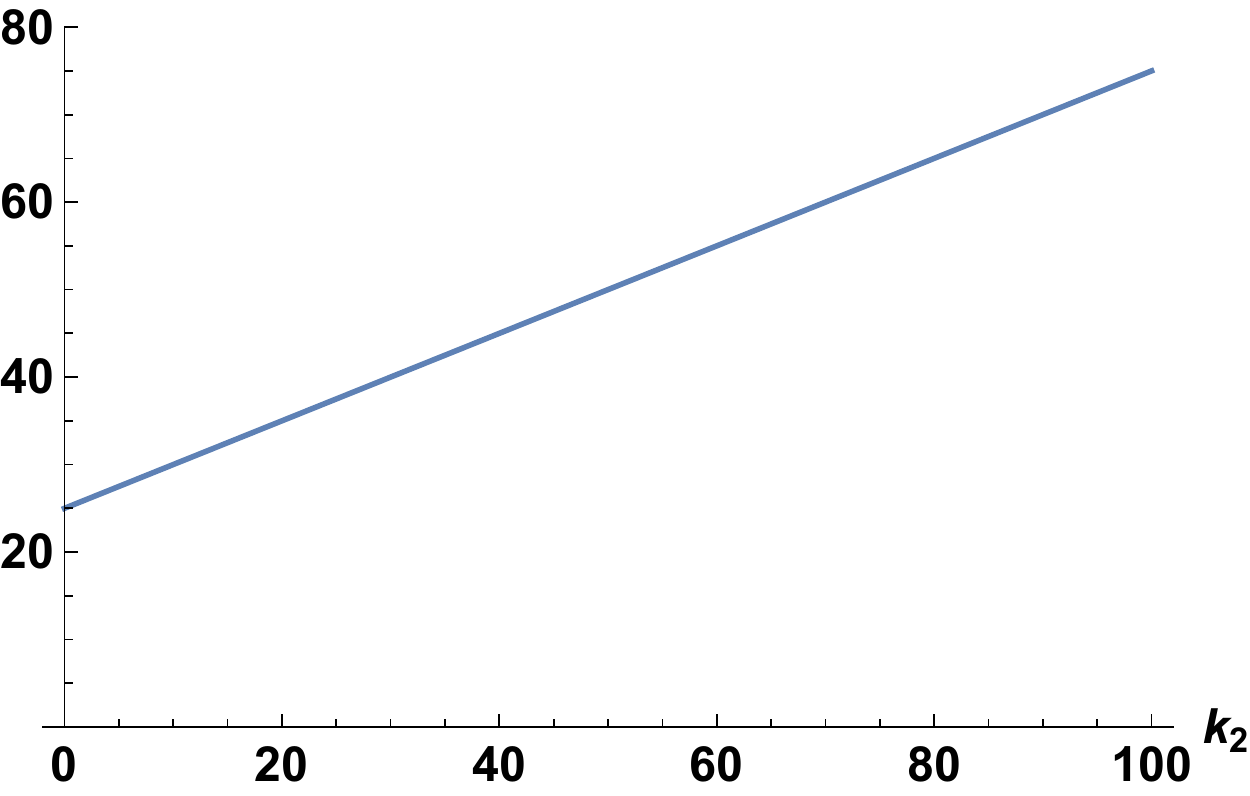}
\subcaption{$\Ex[\Pe(Z)]$}
\label{fig:winter}
\end{center}
\end{minipage}
\begin{minipage}{0.5\hsize}
\begin{center}
\includegraphics[height=4cm]{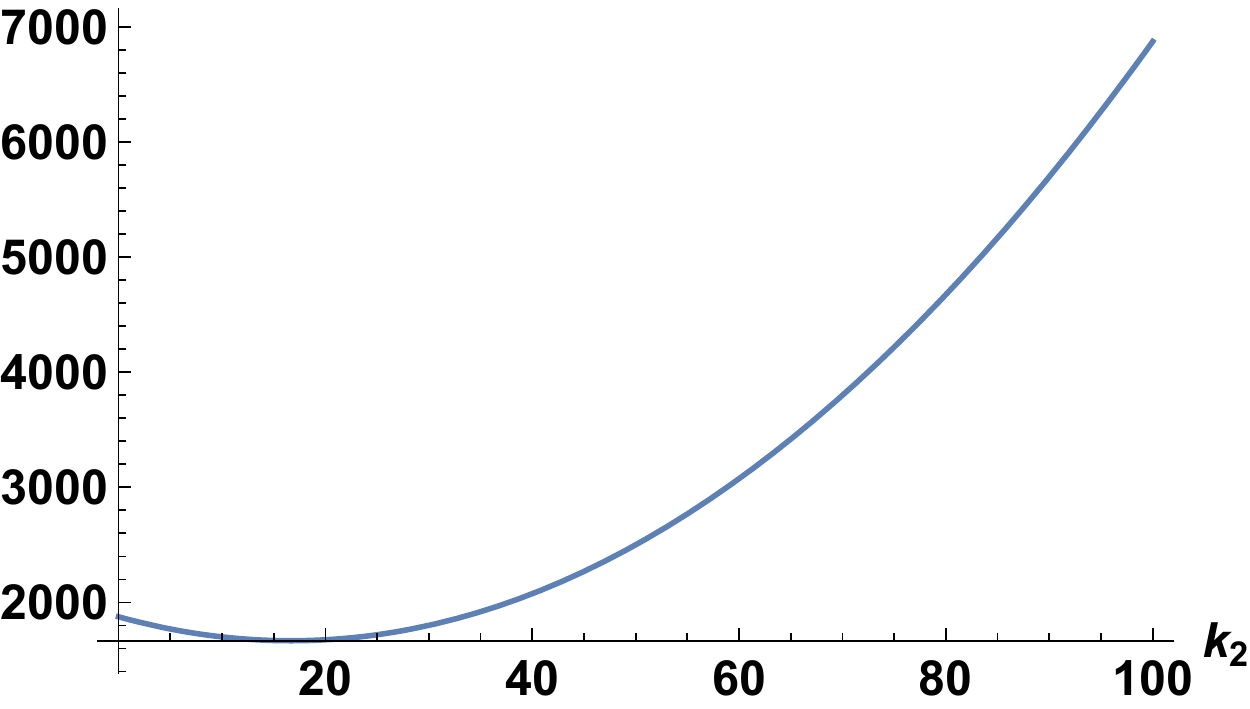}
\subcaption{$\Va[\Pe(Z)]$}
\label{fig:fall}
\end{center}
\end{minipage}
\end{tabular}
\caption{Plots for the Laplace distribution}
\end{figure}
\begin{figure}[h!]
\begin{tabular}{cc}
\begin{minipage}{0.5\hsize}
\begin{center}
\includegraphics[height=4cm]{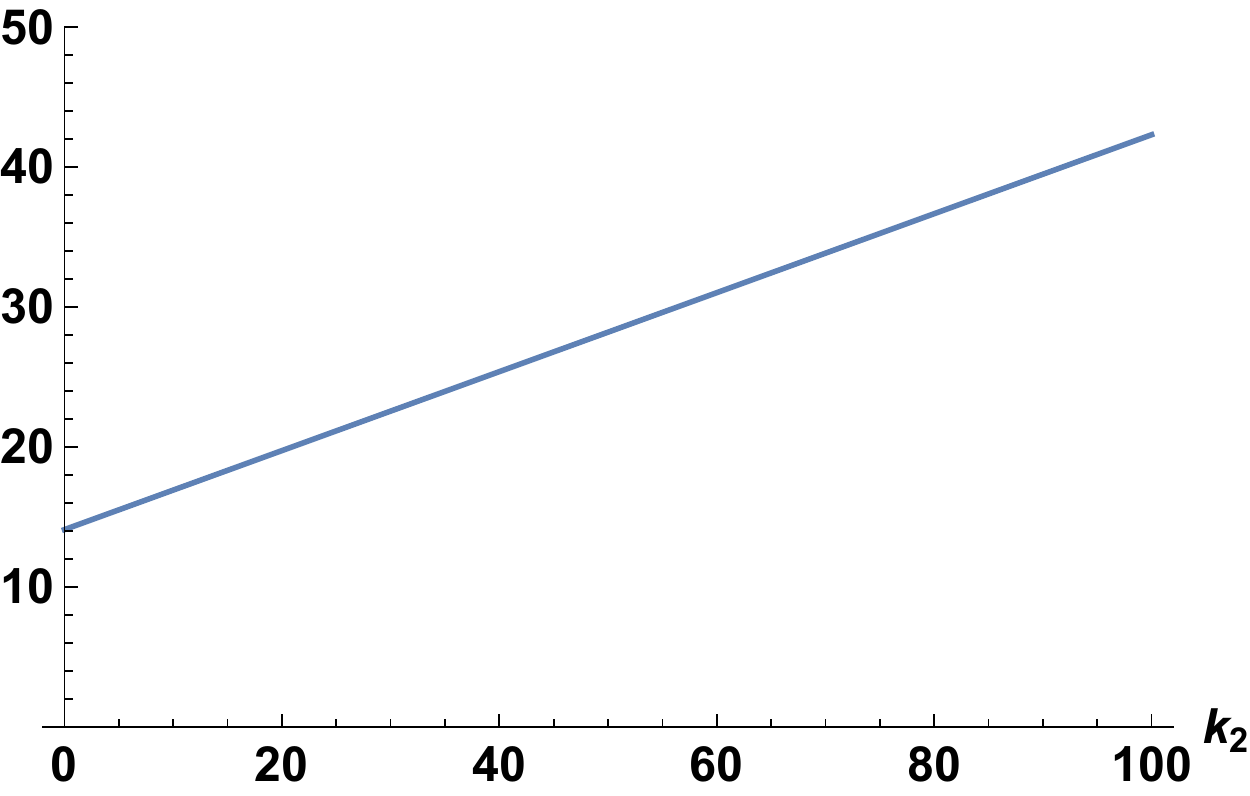}
\subcaption{$\Ex[\Pe(Z)]$}
\label{fig:winter}
\end{center}
\end{minipage}
\begin{minipage}{0.5\hsize}
\begin{center}
\includegraphics[height=4cm]{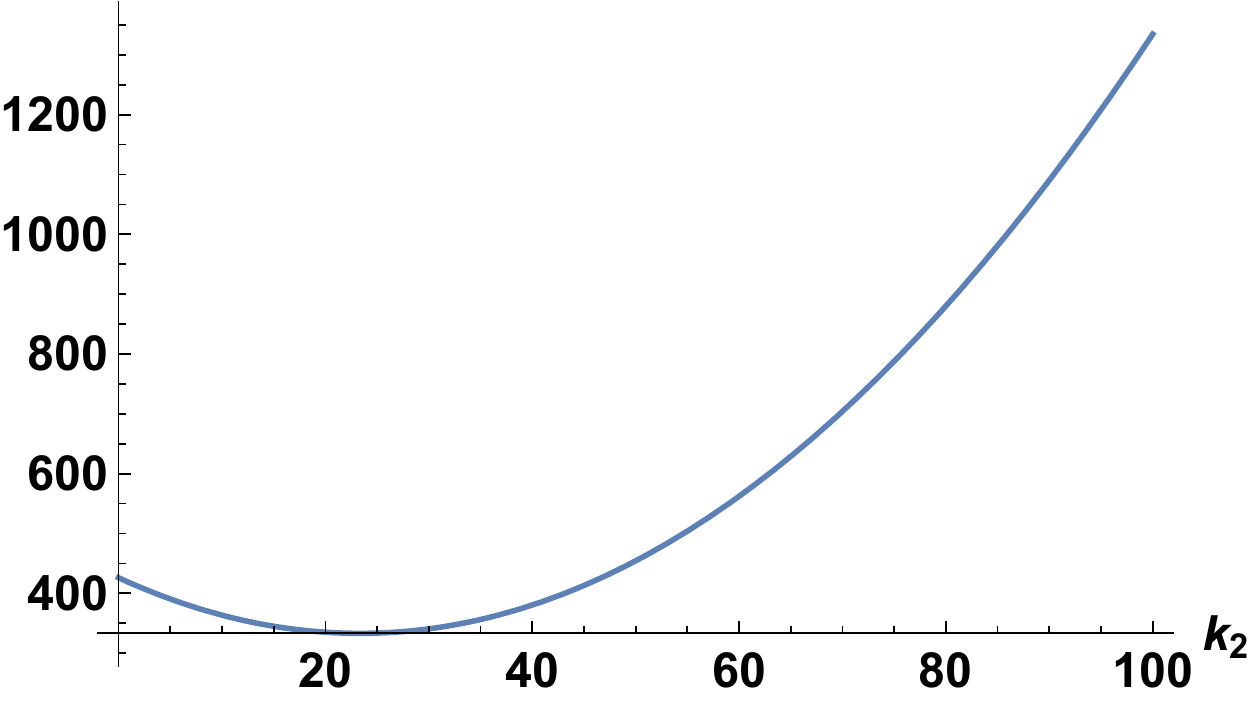}
\subcaption{$\Va[\Pe(Z)]$}
\label{fig:fall}
\end{center}
\end{minipage}
\end{tabular}
\caption{Plots for the Gauss distribution}
\end{figure}

\end{rei}

\subsection{Parameter value minimizing the expected value}
Here, we determine the value of $c = C$ that gives the minimum value of $\Ex[\Pe(Z + c)]$. 
Since 
\begin{align*}
\frac{d}{dc} \G\left(2a, \left\lvert \frac{c}{b} \right\rvert^{\frac{1}{a}} \right) 
&= - \frac{c}{a b} \exp{\left(- \left\lvert \frac{c}{b} \right\rvert^{\frac{1}{a}} \right)}; \\
\frac{d}{dc} \g\left(a, \left\lvert \frac{c}{b} \right\rvert^{\frac{1}{a}} \right) 
&= \Sgn(c) \frac{1}{a b} \exp{\left(- \left\lvert \frac{c}{b} \right\rvert^{\frac{1}{a}} \right)}, 
\end{align*}
we have 
\begin{align*}
\frac{d}{dc} \Ex[\Pe(Z + c)] = \frac{k_{1} - k_{2}}{2} 
+ \Sgn(c) \frac{k_{1} + k_{2}}{2 \G(a)} 
\g\left(a, \left\lvert \frac{c}{b} \right\rvert^{\frac{1}{a}} \right). 
\end{align*}
We will denote the value of $c$ satisfying $\frac{d}{dc} \Ex[\Pe(Z + c)] = 0$ as $C$. 
Then, from the first derivative test, 
we find that $\Ex[\Pe(Z + c)]$ has a minimum value at $c = C$. 
\begin{center}
\begin{tabular}{|c|c|c|c|}
\hline
$c$ & Less than $C$ & $C$ & More than $C$ \\ \hline 
$\frac{d}{dc} \Ex[\Pe(Z + c)]$ & Negative & $0$ & Positive \\ \hline 
$\Ex[\Pe(Z + c)]$ & Strongly decreasing & & Strongly increasing \\ \hline 
\end{tabular}
\end{center}
Also, it follows from  
\begin{align}\label{C}
\g\left(a, \left\lvert \frac{C}{b} \right\rvert^{\frac{1}{a}} \right) 
= \Sgn(C) \frac{k_{2} - k_{1}}{k_{1} + k_{2}} \G(a) 
\end{align}
that $\Sgn(C) = \Sgn(k_{2} - k_{1})$ and $C = 0$ only when $k_{1} = k_{2}$. 

Moreover, equation~$(\ref{C})$ implies that the ratio of $\G(a)$ and 
$\g\left(a, \left\lvert \frac{C}{b} \right\rvert^{\frac{1}{a}} \right)$ is $1 : \frac{\lvert k_{2} - k_{1}\rvert}{k_{1} + k_{2}}$. 
That is, the vertical axis $t = \left\lvert \frac{C}{b} \right\rvert^{\frac{1}{a}}$ divides the area between $t^{a - 1} e^{- t}$, and the $t$-axis into $\frac{\lvert k_{2} - k_{1}\rvert}{k_{1} + k_{2}} : 1- \frac{\lvert k_{2} - k_{1}\rvert}{k_{1} + k_{2}}$. 
\begin{figure}[h!]
 \begin{center}
  \includegraphics[width=80mm]{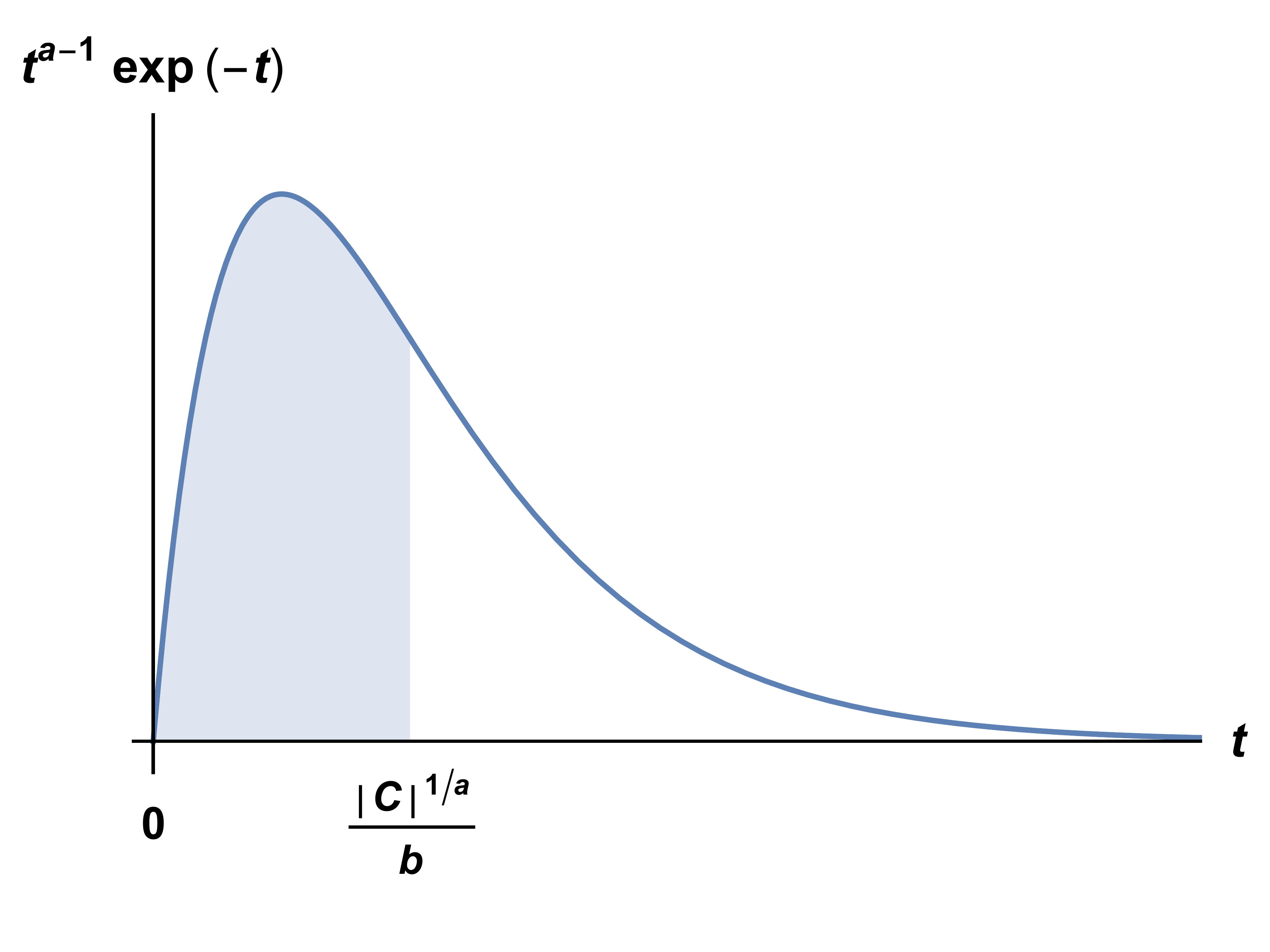}
  \caption{Plot of area ratio}
 \end{center}
\end{figure}

Let $\Erf^{-1}(x)$ be the inverse error function. 
We give two examples of $C$. 
\begin{rei}\label{rei:}
In the case of ${\rm Laplace}(0, b)$, since $a = 1$, we have 
\begin{align*}
C = - \Sgn(k_{2} - k_{1}) b \log{\left( 1 - \left\lvert \frac{k_{2} - k_{1}}{k_{1} + k_{2}} \right\rvert \right)}. 
\end{align*}
In the case of $\mathcal{N}(0, \frac{1}{2} b^{2})$, 
since $a = \frac{1}{2}$, we have
\begin{align*}
C = \Sgn(k_{2} - k_{1}) b \Erf^{-1}\left( \left\lvert \frac{k_{2} - k_{1}}{k_{1} + k_{2}} \right\rvert \right). 
\end{align*}
Fixing the conditions as $k_{1} = 50$ and $b = 1$, 
we can plot $C$ for the Laplace and the Gauss distributions as follows: 
\begin{figure}[h!]
\begin{tabular}{cc}
\begin{minipage}{0.5\hsize}
\begin{center}
\includegraphics[height=4cm]{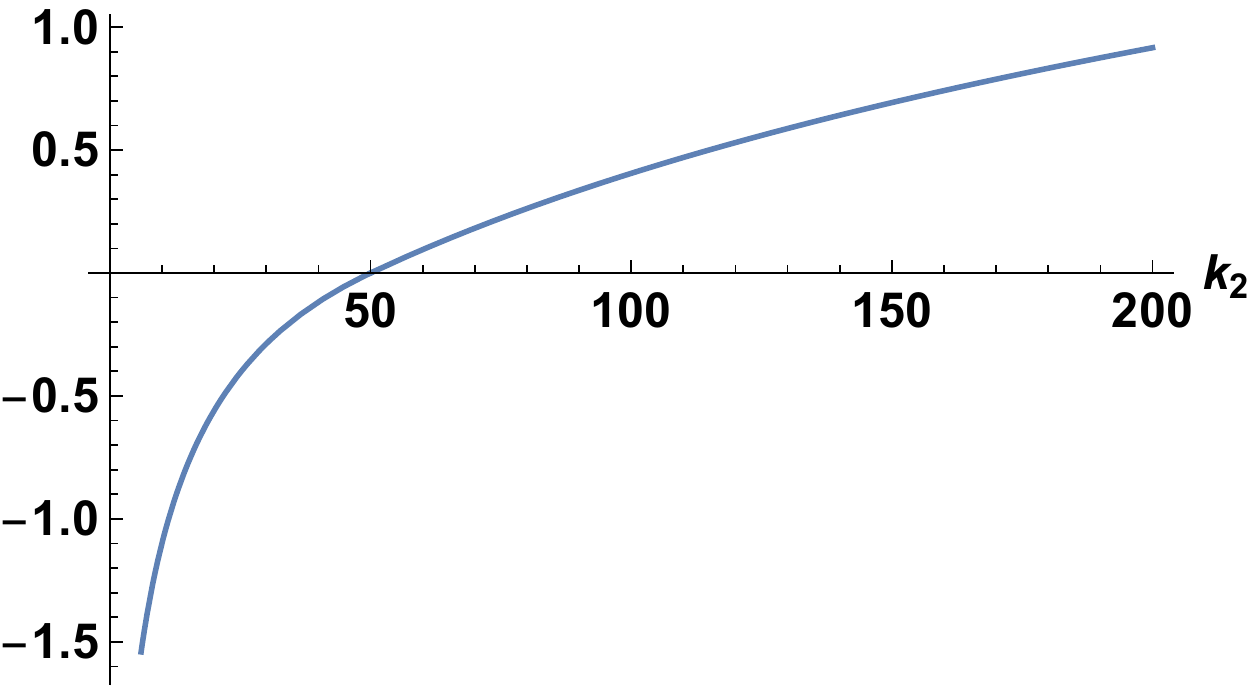}
\subcaption{When $Z \sim {\rm Laplace}(0, 1)$}
\label{fig:winter}
\end{center}
\end{minipage}
\begin{minipage}{0.5\hsize}
\begin{center}
\includegraphics[height=4cm]{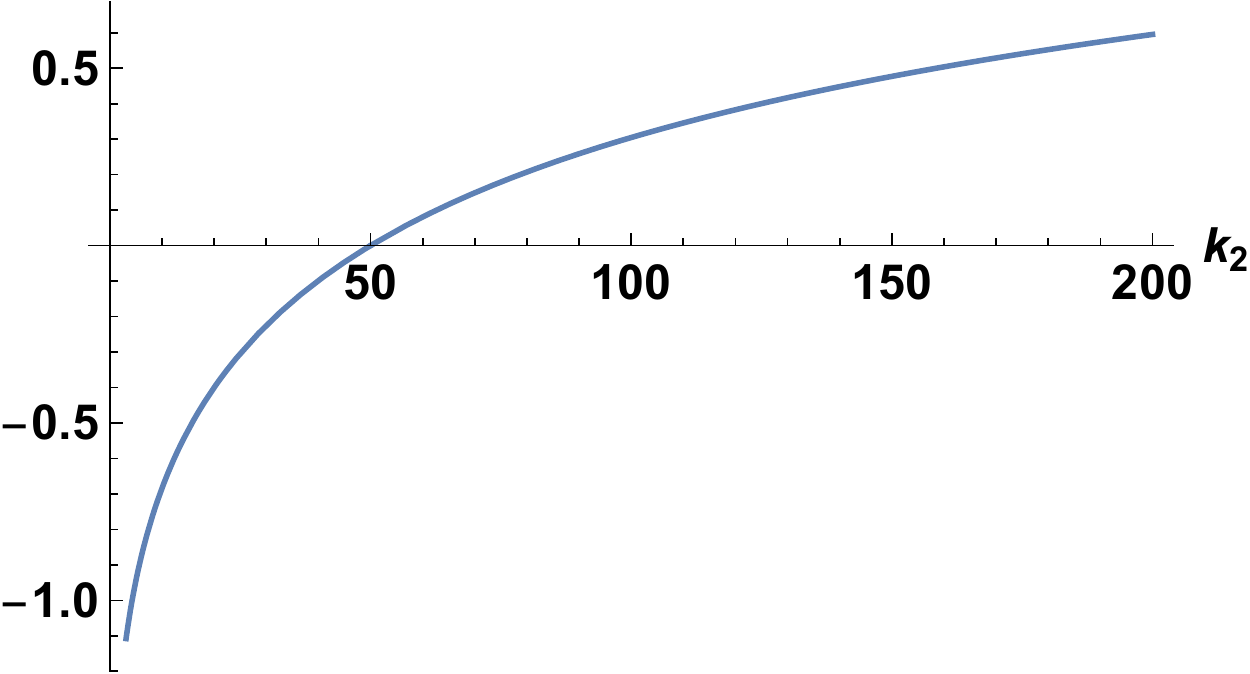}
\subcaption{When $Z \sim \mathcal{N}(0, \frac{1}{2})$}
\label{fig:fall}
\end{center}
\end{minipage}
\end{tabular}
\caption{Plots of $C$ for the Laplace and the Gauss distributions}
\end{figure}

\end{rei}

\subsection{Minimized expected value of the loss}
We give the minimum value of $\Ex[\Pe(Z + c)]$. 
Substituting $c = C$ in equation $(1)$ of Lemma~$\ref{lem:3.1}$, 
from equation~$(\ref{C})$, 
we have 
\begin{align*}
\Ex[\Pe(Z + C)] 
= \frac{(k_{1} + k_{2}) b}{2 \G(a)} 
\G\left(2a, \left\lvert \frac{C}{b} \right\rvert^{\frac{1}{a}} \right). 
\end{align*}
This is the minimum value of $\Ex[\Pe(Z + c)]$. 
From this and equation $(\ref{E[L(Z)]})$, we have the following corollary: 

\begin{cor}[Section~$\ref{S1}$, Corollary~$\ref{cor:1.2}$]\label{cor:3.3}
We have 
\begin{align*}
\Ex[\Pe(Z)] - \Ex[\Pe(Z + C)] 
&= \frac{(k_{1} + k_{2}) b}{2 \G(a)} 
\g\left(2a, \left\lvert \frac{C}{b} \right\rvert^{\frac{1}{a}} \right), \\ 
\frac{\Ex[\Pe(Z + C)]}{\Ex[\Pe(Z)]} 
&= \frac{1}{\G(2a)} \G\left(2a, \left\lvert \frac{C}{b} \right\rvert^{\frac{1}{a}} \right). 
\end{align*}
\end{cor}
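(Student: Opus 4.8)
The plan is to treat both identities as immediate consequences of two closed forms that are already in hand: the $c = 0$ specialization $\Ex[\Pe(Z)] = \frac{(k_{1} + k_{2}) b}{2 \G(a)} \G(2a)$ recorded in equation~$(\ref{E[L(Z)]})$, and the minimized expectation $\Ex[\Pe(Z + C)] = \frac{(k_{1} + k_{2}) b}{2 \G(a)} \G\left(2a, \left\lvert \frac{C}{b} \right\rvert^{\frac{1}{a}} \right)$ derived just above the statement. Since both quantities share the common factor $\frac{(k_{1} + k_{2}) b}{2 \G(a)}$, the whole argument becomes arithmetic once these are substituted.

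First I would establish the difference formula. Subtracting the two expressions and pulling out the common prefactor leaves $\frac{(k_{1} + k_{2}) b}{2 \G(a)}$ multiplied by $\G(2a) - \G\left(2a, \left\lvert \frac{C}{b} \right\rvert^{\frac{1}{a}} \right)$. The only genuine ingredient here is the complementary relation for the incomplete gamma functions, namely Lemma~$\ref{lem:3.0}$(1) applied with parameter $2a$, which gives $\g(2a, x) + \G(2a, x) = \G(2a)$ and hence $\G(2a) - \G(2a, x) = \g(2a, x)$ for $x = \left\lvert \frac{C}{b} \right\rvert^{\frac{1}{a}}$. Substituting this back produces the first claimed identity directly.

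For the second identity I would simply form the ratio $\Ex[\Pe(Z + C)] / \Ex[\Pe(Z)]$. The shared prefactor $\frac{(k_{1} + k_{2}) b}{2 \G(a)}$ cancels between numerator and denominator, leaving $\G\left(2a, \left\lvert \frac{C}{b} \right\rvert^{\frac{1}{a}} \right) / \G(2a)$, which is exactly the stated expression. No positivity or well-definedness issues arise, since $k_{1}, k_{2}, b > 0$ and $\G(2a) > 0$ for $a > 0$, so the denominator never vanishes.

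There is no substantive obstacle in this corollary: once Lemma~$\ref{lem:3.1}$ and the determination of the minimizer $C$ are accepted, the result is pure bookkeeping. If anything, the only point warranting care is keeping the argument $\left\lvert \frac{C}{b} \right\rvert^{\frac{1}{a}}$ of the incomplete gamma functions fixed throughout, so that the complementary identity is invoked at a single value of $x$. The real content of the development lies upstream, in Lemma~$\ref{lem:3.1}$ and the minimization yielding $C$, and downstream, in the variance bound of Theorem~$\ref{thm:1.3}$, rather than in this algebraic consequence.
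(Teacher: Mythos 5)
Your proposal is correct and matches the paper's own (implicit) derivation exactly: the paper also obtains the corollary by combining equation~(\ref{E[L(Z)]}) with the minimized expectation $\Ex[\Pe(Z + C)] = \frac{(k_{1} + k_{2}) b}{2 \G(a)} \G\bigl(2a, \lvert C/b \rvert^{1/a}\bigr)$, using Lemma~\ref{lem:3.0}(1) for the difference and cancelling the common prefactor for the ratio. Nothing is missing.
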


Fixing the conditions as $k_{1} = 50$ and $b = 1$, 
we can plot the plots of $\Ex[\Pe(Z)] - \Ex[\Pe(Z + C)]$ for the Laplace and the Gauss distributions as follows: 
\begin{figure}[h!]
\begin{tabular}{cc}
\begin{minipage}{0.5\hsize}
\begin{center}
\includegraphics[height=4cm]{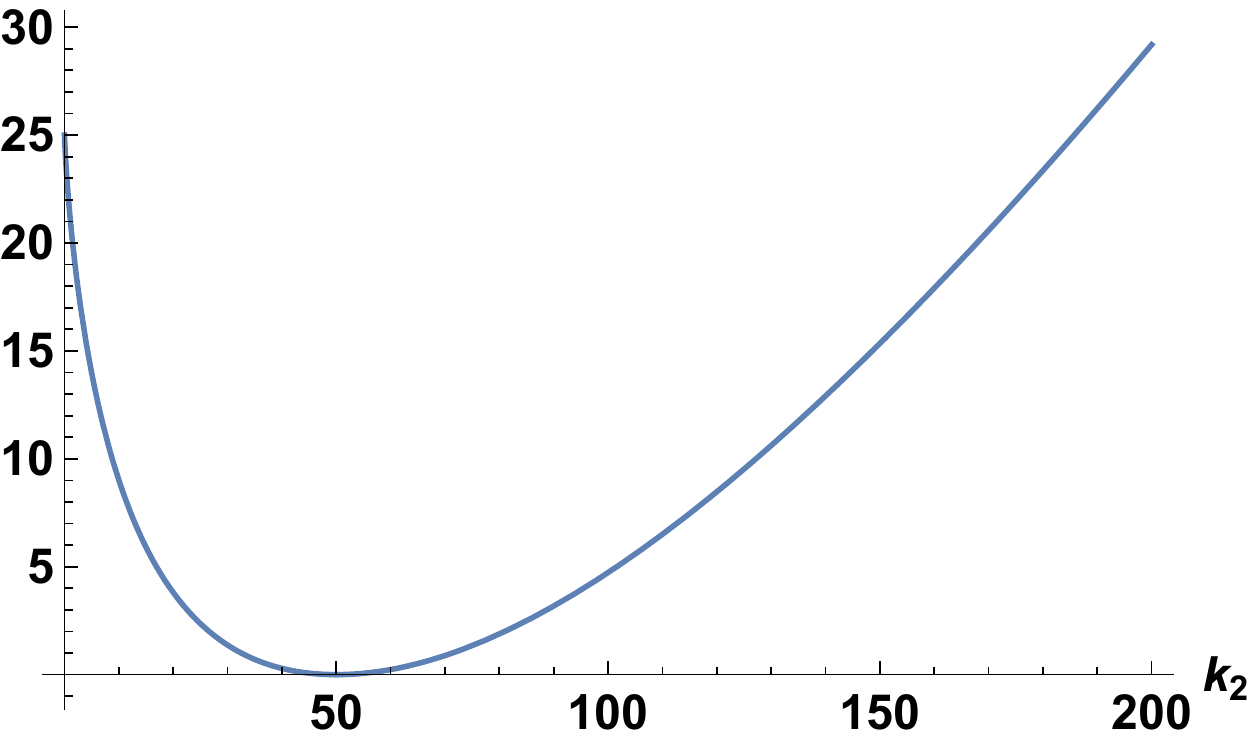}
\subcaption{When $Z \sim {\rm Laplace}(0, 1)$}
\label{fig:winter}
\end{center}
\end{minipage}
\begin{minipage}{0.5\hsize}
\begin{center}
\includegraphics[height=4cm]{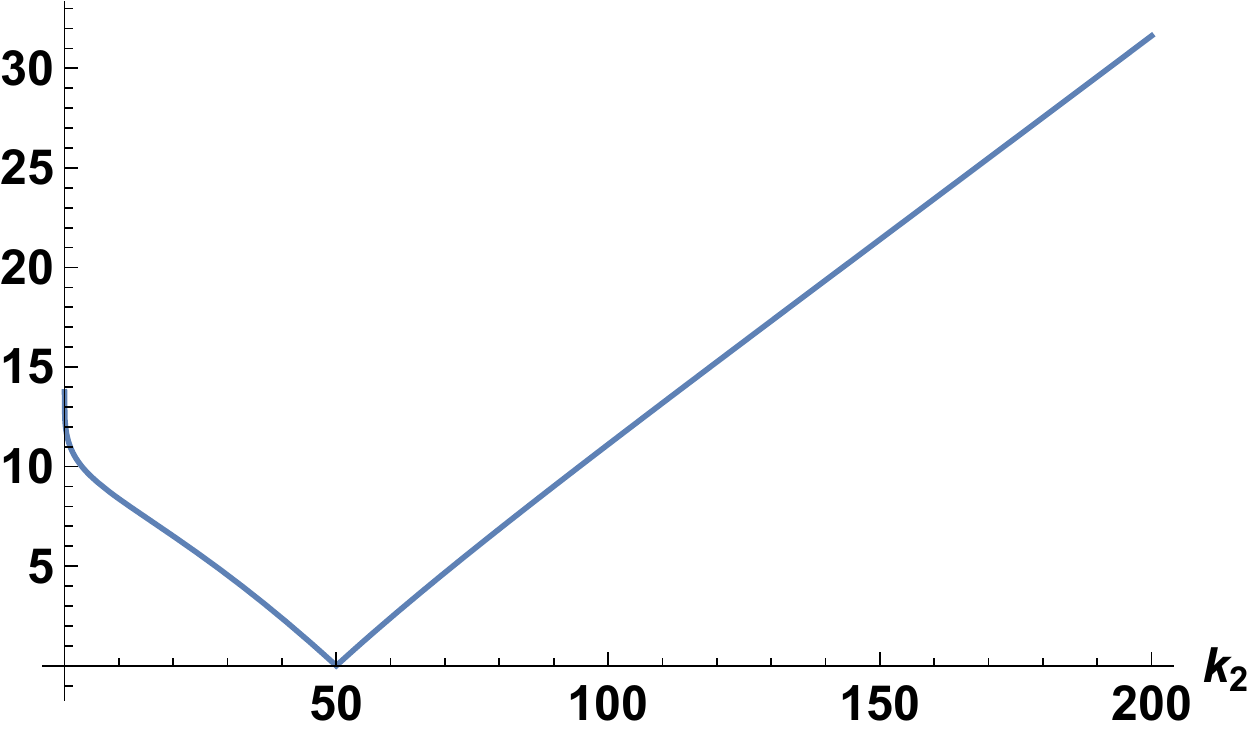}
\subcaption{When $Z \sim \mathcal{N}(0, \frac{1}{2})$}
\label{fig:fall}
\end{center}
\end{minipage}
\end{tabular}
\caption{Plots of $\Ex[\Pe(Z)] - \Ex[\Pe(Z + C)]$ for the Laplace and the Gauss distributions}
\end{figure}

\section{An inequality for the variance of the loss}
In this section, we derive an inequality for the variance of $\Pe(Z + c)$. 
Let $C$ be the value of $c$ giving the minimum value of $\Ex[\Pe(Z + c)]$. 
Then, the following holds: 

\begin{thm}[Section~$\ref{S1}$, Theorem~$\ref{thm:1.3}$]\label{thm:3.4}
We have 
\begin{align*}
\Va[\Pe(Z + C)] \leq \Va[\Pe(Z)], 
\end{align*}
where equality holds only when $C = 0$; that is, when $k_{1} = k_{2}$.  
\end{thm}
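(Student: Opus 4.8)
The plan is to establish the equivalent inequality $D := \Va[\Pe(Z)] - \Va[\Pe(Z + C)] \geq 0$, with equality exactly when $C = 0$. First I would substitute $c = C$ into part~$(2)$ of Lemma~$\ref{lem:3.1}$ and simplify using~$(\ref{C})$. Writing $t := \lvert C/b \rvert^{\frac{1}{a}}$, so that $\lvert C \rvert = b t^{a}$ and $C^{2} = b^{2} t^{2a}$, equation~$(\ref{C})$ together with $\Sgn(C) = \Sgn(k_{2} - k_{1})$ reads
\begin{align*}
\g(a, t) = \frac{\lvert k_{2} - k_{1} \rvert}{k_{1} + k_{2}}\, \G(a).
\end{align*}
Substituting this relation (and the identity $k_{1}^{2} - k_{2}^{2} = -(k_{2} - k_{1})(k_{1} + k_{2})$ to absorb the signed terms) collapses the dependence on $k_{1}, k_{2}$. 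In particular the $\G(3a)$ term of $\Va[\Pe(Z + C)]$ is identical to that of $\Va[\Pe(Z)]$ in~$(\ref{V[L(Z)]})$ and cancels in $D$.

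After this cancellation, and factoring out the positive constant $(k_{1} + k_{2})^{2} b^{2} / (4 \G(a)^{2})$, I expect $D$ to reduce to a positive multiple of a single function of $t$,
\begin{align*}
E(t) := t^{2a} \g(a,t)^{2} - t^{2a} \G(a)^{2} + 4 t^{a} \g(a,t) \G(2a,t) + \G(2a,t)^{2} - \G(2a)^{2} + 2 \g(a,t) \g(3a,t).
\end{align*}
It then suffices to show $E(t) \geq 0$ for $t \geq 0$, with equality only at $t = 0$. By Lemma~$\ref{lem:3.0}$ we have $\g(a,0) = \g(3a,0) = 0$ and $\G(2a,0) = \G(2a)$, so $E(0) = 0$; the claim follows once I show $E'(t) > 0$ for all $t > 0$.

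The decisive step is the derivative computation. Differentiating $E$ using $\frac{d}{dt}\g(a,t) = t^{a-1} e^{-t}$, $\frac{d}{dt}\G(2a,t) = -t^{2a-1} e^{-t}$, and $\frac{d}{dt}\g(3a,t) = t^{3a-1} e^{-t}$ from Lemma~$\ref{lem:3.0}$, I anticipate a clean cancellation in which every term proportional to $t^{3a-1} e^{-t} \g(a,t)$ vanishes, leaving
\begin{align*}
E'(t) = 2 t^{a-1} \left[ a \left( t^{a} \g(a,t)^{2} - t^{a} \G(a)^{2} + 2 \g(a,t) \G(2a,t) \right) + e^{-t} \left( t^{a} \G(2a,t) + \g(3a,t) \right) \right].
\end{align*}
The bracket is strictly positive for $t > 0$: its first group is exactly $a$ times the quantity shown positive in Lemma~$\ref{lem:1.4}$, while its second group is a sum of products of positive quantities. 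Hence $E'(t) > 0$ on $(0, \infty)$, so $E$ is strictly increasing from $E(0) = 0$ and $E(t) > 0$ for $t > 0$. Translating back, $D \geq 0$ with equality iff $t = 0$, that is iff $C = 0$, that is (by the discussion following~$(\ref{C})$) iff $k_{1} = k_{2}$.

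The genuinely hard analytic input is Lemma~$\ref{lem:1.4}$ itself, where the nontrivial estimates on the gamma and incomplete gamma functions reside and which is proved separately via Lemmas~$\ref{lem:1.5}$ and~$\ref{lem:1.6}$. Within the present argument the only delicate point is the bookkeeping of the substitution $c = C$ — tracking the signs $\Sgn(C)$ and the factor $k_{1}^{2} - k_{2}^{2}$ so that the $\G(3a)$ terms cancel and the $k_{i}$-dependence collapses into the single relation for $\g(a,t)$ — after which the factorization of $E'(t)$ through Lemma~$\ref{lem:1.4}$ renders the positivity transparent.
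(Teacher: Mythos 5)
Your proposal is correct and follows essentially the same route as the paper: substituting $c = C$, using equation~$(\ref{C})$ to collapse the $k_{i}$-dependence, reducing the difference of variances to the positive multiple $\frac{(k_{1}+k_{2})^{2}b^{2}}{4\G(a)^{2}}$ of the same function $f(a,x)$ (your $E(t)$), and showing $f(a,0)=0$ with $\frac{d}{dx}f(a,x)>0$ via Lemma~$\ref{lem:1.4}$. Your derivative computation, including the cancellation of the $t^{3a-1}e^{-t}\g(a,t)$ terms, matches the paper's exactly.
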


Fixing the conditions as $k_{1} = 50$ and $b = 1$, 
we can plot $\Va[\Pe(Z)] - \Va[\Pe(Z + C)]$ for the Laplace and the Gauss distributions as follows: 
\begin{figure}[ht!]
\begin{tabular}{cc}
\begin{minipage}{0.5\hsize}
\begin{center}
\includegraphics[height=4cm]{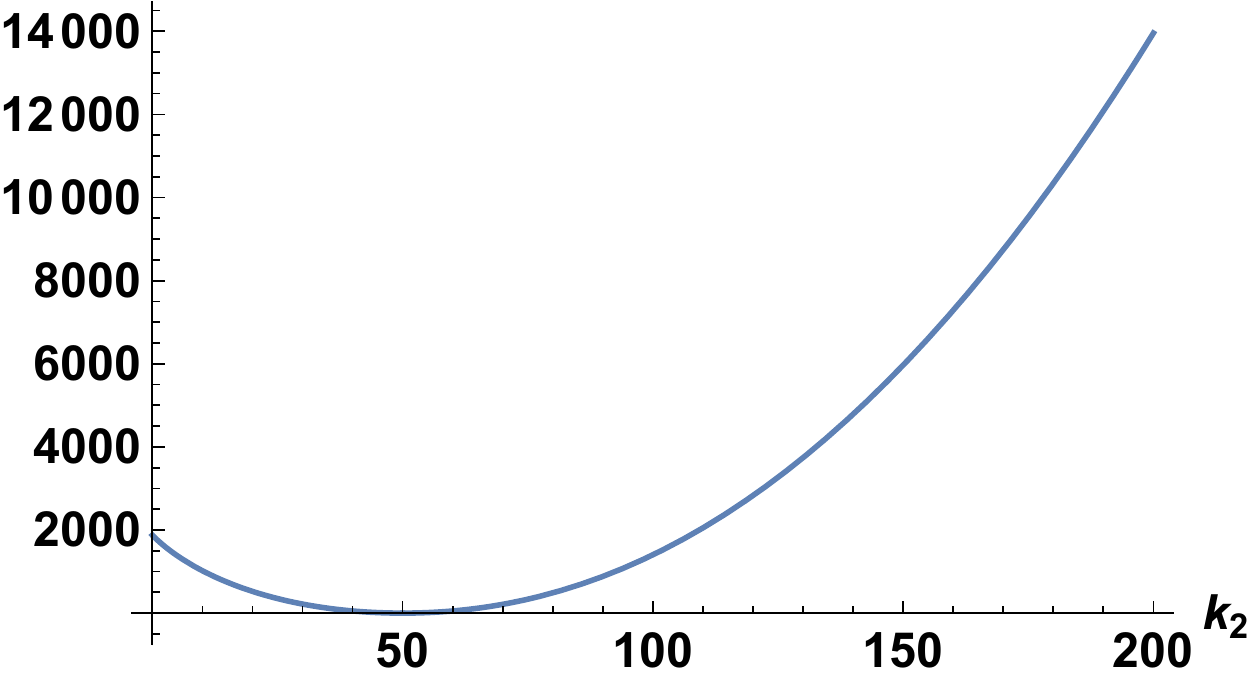}
\subcaption{When $Z \sim {\rm Laplace}(0, 1)$}
\label{fig:winter}
\end{center}
\end{minipage}
\begin{minipage}{0.5\hsize}
\begin{center}
\includegraphics[height=4cm]{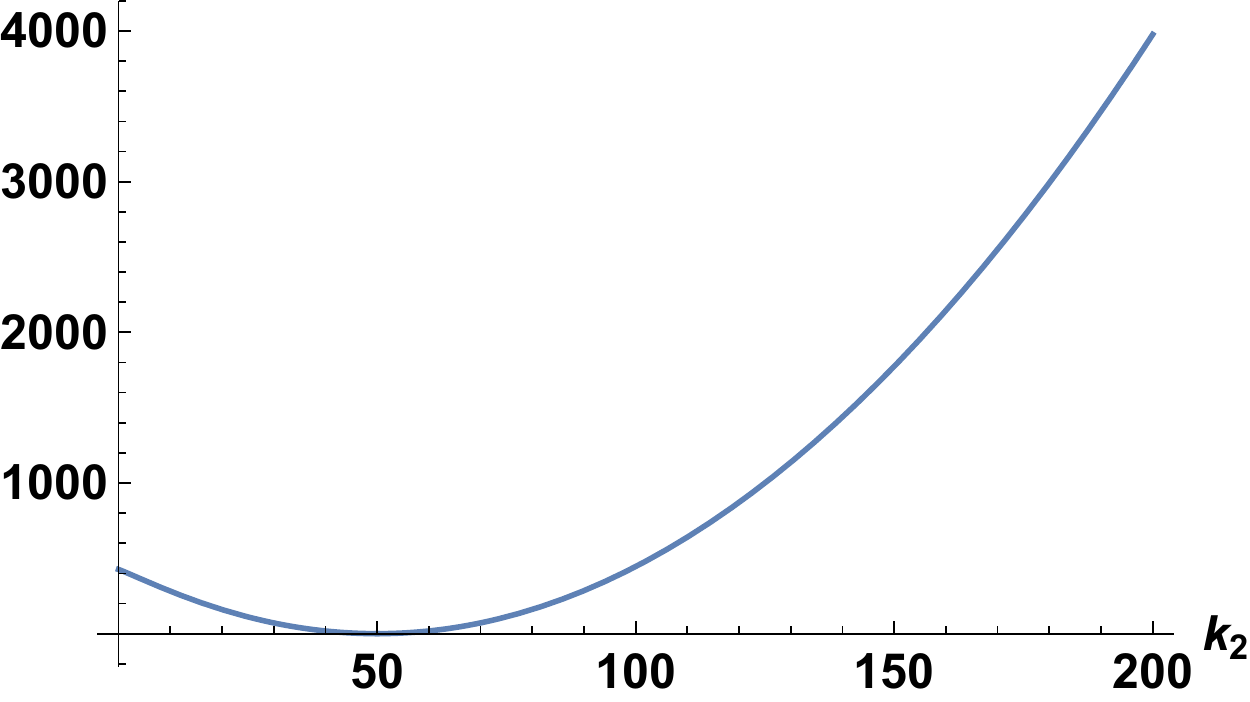}
\subcaption{When $Z \sim \mathcal{N}(0, \frac{1}{2})$}
\label{fig:fall}
\end{center}
\end{minipage}
\end{tabular}
\caption{Plots of $\Va[\Pe(Z)] - \Va[\Pe(Z + C)]$ for the Laplace and the Gauss distributions}
\end{figure}

To prove Theorem~$\ref{thm:3.4}$, we use the following lemma: 
\begin{lem}[Section~$\ref{S1}$, Lemma~$\ref{lem:1.4}$]\label{lem:3.5}
For $a > 0$ and $x > 0$, 
we have 
\begin{align*}
x^{a} \g(a, x)^{2} - x^{a} \G(a)^{2} + 2 \g(a, x) \G(2a, x) > 0. 
\end{align*}
\end{lem}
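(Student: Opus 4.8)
The plan is to reduce the stated inequality to an elementary sign analysis by differentiating repeatedly. First I would write $F(x) := x^{a}\g(a,x)^{2} - x^{a}\G(a)^{2} + 2\g(a,x)\G(2a,x)$ for the left-hand side. Using the small-$x$ behaviour $\g(a,x)\sim x^{a}/a$ and the large-$x$ behaviour $\G(a,x)\sim x^{a-1}e^{-x}$, together with the rewriting $x^{a}\g(a,x)^{2}-x^{a}\G(a)^{2} = -x^{a}\G(a,x)(\g(a,x)+\G(a))$, one checks that $F(0^{+})=0$ and $F(+\infty)=0$. Hence it suffices to prove that $F$ is \emph{unimodal} on $(0,\infty)$, i.e.\ strictly increasing and then strictly decreasing, which forces $F>0$ strictly between its two vanishing endpoints. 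Differentiating and using Lemma~\ref{lem:3.0}, the two copies of $2x^{2a-1}e^{-x}\g(a,x)$ cancel, leaving $F'(x)=x^{a-1}G(x)$ with $G(x):=a\g(a,x)^{2}-a\G(a)^{2}+2e^{-x}\G(2a,x)$. Since $x^{a-1}>0$, the unimodality of $F$ is equivalent to $G$ having exactly one sign change, from $+$ to $-$.

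The endpoints of $G$ are favourable: $G(0)=2\G(2a)-a\G(a)^{2}>0$ by Lemma~\ref{lem:1.5}, while comparing exponential factors shows $G(+\infty)=0$ with $G<0$ for large $x$ (the term $-2a\G(a)x^{a-1}e^{-x}$ dominates $2x^{2a-1}e^{-2x}$). So $G$ changes sign at least once, and the task is to show it does so only once. I would control $G$ through its own derivative: again by Lemma~\ref{lem:3.0}, $G'(x)=2e^{-x}H(x)$ with $H(x):=ax^{a-1}\g(a,x)-\G(2a,x)-x^{2a-1}e^{-x}$, where a short computation gives $H(0^{+})=-\G(2a)<0$ and, since the polynomially decaying term $a\G(a)x^{a-1}$ beats all the exponentially small terms, $H(+\infty)=0$ with $H>0$ for large $x$. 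If $H$ changes sign exactly once (necessarily from $-$ to $+$), then $G'$ is first negative then positive, so $G$ is valley-shaped; combined with $G(0)>0$ and $G(+\infty)=0^{-}$, this pins down a single zero of $G$ of the required $+\to-$ type, completing the proof.

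Everything thus reduces to the monotonicity pattern of $H$. Differentiating once more, the first-order terms cancel and $H'(x)=x^{a-2}\big[(1-a)Q(x)+2x^{a+1}e^{-x}\big]$, where $Q(x):=x^{a}e^{-x}-a\g(a,x)$. The sub-lemma $Q(x)<0$ for $x>0$ is immediate, since $Q(0)=0$ and $Q'(x)=-x^{a}e^{-x}<0$. For $a\ge 1$ this instantly yields $H'(x)>0$, so $H$ is strictly increasing and has a unique zero. The hard part will be the regime $0<a<1$, where $H'$ is not of constant sign and I must descend one more level. Writing the bracket as $R(x)$, a direct differentiation produces the clean cancellation $R'(x)=x^{a}e^{-x}\big[(3a+1)-2x\big]$. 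Hence $R$ rises on $(0,\tfrac{3a+1}{2})$ and falls afterwards, starting from $R(0)=0$ and tending to $R(+\infty)=-a(1-a)\G(a)<0$; therefore $R$, and with it $H'$, changes sign exactly once, from $+$ to $-$. Consequently $H$ is unimodal with $H(0^{+})<0$ and $H(+\infty)=0^{+}$, giving a single zero of $H$ of type $-\to+$, as needed.

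The main obstacle is precisely this case $0<a<1$, where one is forced into the third-order analysis above. What makes the argument go through is the chain of cancellations at each differentiation, culminating in the elementary expression $R'(x)=x^{a}e^{-x}[(3a+1)-2x]$, which converts the problem into reading off signs at the two endpoints at every level of the cascade $F\to G\to H\to R$. I would also be careful with the \emph{signs} of the approaches $G(+\infty)=0^{-}$ and $H(+\infty)=0^{+}$, not merely the limiting values, since it is exactly these signs — anchored at one end by Lemma~\ref{lem:1.5} — that propagate back up the chain to force $F>0$ on $(0,\infty)$, with equality holding only in the degenerate limit.
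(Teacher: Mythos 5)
Your proof is correct and follows essentially the same route as the paper: your cascade $F\to G\to H\to R$ is exactly the paper's chain $y_{1}\to y_{2}\to y_{3}\to y_{4}$ (with identical derivative identities, the same terminal computation $R'(x)=x^{a}e^{-x}[(3a+1)-2x]$, and the same anchors, namely Lemma~$\ref{lem:4-1-3}$ for $G(0)>0$ and the sub-lemma $a\g(a,x)\ge x^{a}e^{-x}$, which is the paper's Lemma~$\ref{lem:4-2-2}$). The only cosmetic difference is that you dispatch the case $a\ge 1$ directly from the sign of $(1-a)Q(x)$ rather than via the paper's sign table, which is a mild streamlining of the same argument.
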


The proof of Lemma~$\ref{lem:3.5}$ is presented in Section~$5.2$. 
Now we can prove Theorem~$\ref{thm:3.4}$. 

\begin{proof}[Proof of Theorem~$\ref{thm:3.4}$]
It follows from the equation $(\ref{C})$ that
\begin{align*}
\frac{(k_{1}^{2} - k_{2}^{2}) b C}{2 \G(a)} 
\G\left(2a, \left\lvert \frac{C}{b} \right\rvert^{\frac{1}{a}} \right)
&= - \Sgn(C) \frac{k_{2} - k_{1}}{k_{1} + k_{2}} 
\frac{(k_{1} + k_{2})^{2} b \lvert C \rvert}{2 \G(a)}
\G\left(2a, \left\lvert \frac{C}{b} \right\rvert^{\frac{1}{a}} \right) \\
&= - \frac{(k_{1} + k_{2})^{2} b \lvert C \rvert}{2 \G(a)^{2}} 
\g\left(a, \left\lvert \frac{C}{b} \right\rvert^{\frac{1}{a}} \right) 
\G\left(2a, \left\lvert \frac{C}{b} \right\rvert^{\frac{1}{a}} \right), \allowdisplaybreaks \\
\Sgn(C) \frac{(k_{1}^{2} - k_{2}^{2}) b^{2}}{2 \G(a)} 
\g\left(3a, \left\lvert \frac{C}{b} \right\rvert^{\frac{1}{a}} \right) 
&= - \Sgn(C) \frac{k_{2} - k_{1}}{k_{1} + k_{2}} 
\frac{(k_{1} + k_{2})^{2} b^{2}}{2 \G(a)} 
\g\left(3a, \left\lvert \frac{C}{b} \right\rvert^{\frac{1}{a}} \right) \\
&= - \frac{(k_{1} + k_{2})^{2} b^{2}}{2 \G(a)^{2}}  
\g\left(a, \left\lvert \frac{C}{b} \right\rvert^{\frac{1}{a}} \right) 
\g\left(3a, \left\lvert \frac{C}{b} \right\rvert^{\frac{1}{a}} \right). 
\end{align*}
Hence, substituting $c = C$ in equation~$(2)$ of Lemma~$\ref{lem:3.1}$, 
we have 
\begin{align*}
\Va[\Pe(Z + C)] 
&= \frac{(k_{1} + k_{2})^{2} C^{2}}{4} 
- \frac{(k_{1} + k_{2})^{2} b \lvert C \rvert}{\G(a)^{2}} 
\g\left(a, \left\lvert \frac{C}{b} \right\rvert^{\frac{1}{a}} \right) 
\G\left(2a, \left\lvert \frac{C}{b} \right\rvert^{\frac{1}{a}} \right) \\
&\quad - \frac{(k_{1} + k_{2})^{2} C^{2}}{4 \G(a)^{2}}
\g\left(a, \left\lvert \frac{C}{b} \right\rvert^{\frac{1}{a}} \right)^{2} 
- \frac{(k_{1} + k_{2})^{2} b^{2}}{4 \G(a)^{2}}
\G\left(2a, \left\lvert \frac{C}{b} \right\rvert^{\frac{1}{a}} \right)^{2} \\
&\quad + \frac{(k_{1}^{2} + k_{2}^{2}) b^{2} \G(3a)}{2 \G(a)}  
- \frac{(k_{1} + k_{2})^{2} b^{2}}{2 \G(a)^{2}}  
\g\left(a, \left\lvert \frac{C}{b} \right\rvert^{\frac{1}{a}} \right) 
\g\left(3a, \left\lvert \frac{C}{b} \right\rvert^{\frac{1}{a}} \right). 
\end{align*}
From this and equation $(\ref{V[L(Z)]})$, we obtain 
\begin{align*}
&\!\!\!\!\Va[\Pe(Z)] - \Va[\Pe(Z + C)] \\
&= - \frac{(k_{1} + k_{2})^{2} C^{2}}{4}  
+ \frac{(k_{1} + k_{2})^{2} b \lvert C \rvert}{\G(a)^{2}} 
\g\left(a, \left\lvert \frac{C}{b} \right\rvert^{\frac{1}{a}} \right) 
\G\left(2a, \left\lvert \frac{C}{b} \right\rvert^{\frac{1}{a}} \right) \\
&\quad + \frac{(k_{1} + k_{2})^{2} C^{2}}{4 \G(a)^{2}}
\g\left(a, \left\lvert \frac{C}{b} \right\rvert^{\frac{1}{a}} \right)^{2} 
+ \frac{(k_{1} + k_{2})^{2} b^{2}}{4 \G(a)^{2}}
\G\left(2a, \left\lvert \frac{C}{b} \right\rvert^{\frac{1}{a}} \right)^{2} \\
&\quad - \frac{(k_{1} + k_{2})^{2} b^{2} \G(2a)^{2}}{4 \G(a)^{2}} 
+ \frac{(k_{1} + k_{2})^{2} b^{2}}{2 \G(a)^{2}}  
\g\left(a, \left\lvert \frac{C}{b} \right\rvert^{\frac{1}{a}} \right) 
\g\left(3a, \left\lvert \frac{C}{b} \right\rvert^{\frac{1}{a}} \right) \\
&= \frac{(k_{1} + k_{2})^{2} b^{2}}{4 \G(a)^{2}} 
f \left(a, \left\lvert \frac{C}{b} \right\rvert^{\frac{1}{a}} \right),  
\end{align*}
where, for $a > 0$ and $x \geq 0$, $f(a, x)$ is defined as  
\begin{align*}
f(a, x) 
&:= x^{2a} \g(a, x)^{2} - x^{2a} \G(a)^{2} + 4 x^{a} \g(a, x) \G(2a, x) \\
&\quad + \G(2a, x)^{2} - \G(2a)^{2} + 2 \g(a, x) \g(3a, x). 
\end{align*}
Here, since  
\begin{align*}
\frac{d}{dx} f(a, x) 
&= 2 a x^{a - 1} \left\{x^{a} \g(a, x)^{2} - x^{a} \G(a)^{2} + 2\g(a, x) \G(2a, x) \right\} \\
&\quad + 2 x^{a - 1} e^{-x} \g(3a, x) + 2 x^{2a - 1} e^{-x} \G(2a, x), 
\end{align*}
from Lemma~$\ref{lem:3.5}$, we have $\frac{d}{dx} f(a, x) > 0$ ($a > 0$, $x > 0$). 
Also, $f(a, 0) = 0$ holds for $a > 0$. 
Therefore, we obtain 
\begin{align*}
\Va[\Pe(Z)] - \Va[\Pe(Z + C)] \geq 0,  
\end{align*}
where equality holds only when $C = 0$. 
Moreover, from equation~$(\ref{C})$, we find that $C = 0$ holds only when $k_{1} = k_{2}$. 
\end{proof}

\section{Inequalities for the gamma and the incomplete gamma functions}
In this section, we give some inequalities for the gamma and the incomplete gamma functions, 
which we used to derive the inequality for the variance of the loss in Theorem~$\ref{thm:3.4}$. 

\subsection{Inequalities for the gamma function}

To prove Lemma~$\ref{lem:3.5}$, we use the following: 
\begin{lem}[Section~$\ref{S1}$, Lemma~$\ref{lem:1.5}$]\label{lem:4-1-3}
For $a > 0$, we have 
\begin{align*}
2 \G(2a) - a \G(a)^{2} > 0. 
\end{align*}
\end{lem}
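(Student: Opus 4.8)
The plan is to reduce the claimed inequality to Lemma~\ref{lem:1.6} by invoking Legendre's duplication formula for the gamma function. That formula can be written as
\begin{align*}
\G(2a) = \frac{2^{2a - 1}}{\sqrt{\pi}}\, \G(a)\, \G\!\left(a + \tfrac{1}{2}\right),
\end{align*}
so that $2\G(2a) = \frac{4^{a}}{\sqrt{\pi}}\, \G(a)\, \G\!\left(a + \tfrac{1}{2}\right)$. The whole proof hinges on rewriting the target expression in a form where this single known inequality decides the sign.

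First I would substitute the above into $2\G(2a) - a\G(a)^{2}$ and factor out the strictly positive quantity $\G(a)$, which gives
\begin{align*}
2\G(2a) - a\G(a)^{2}
= \G(a)\left[\frac{4^{a}}{\sqrt{\pi}}\, \G\!\left(a + \tfrac{1}{2}\right) - a\,\G(a)\right].
\end{align*}
Using the functional equation $a\,\G(a) = \G(a + 1)$, the bracketed factor equals $\frac{1}{\sqrt{\pi}}\left[4^{a}\,\G\!\left(a + \tfrac{1}{2}\right) - \sqrt{\pi}\,\G(a + 1)\right]$, so that
\begin{align*}
2\G(2a) - a\G(a)^{2}
= \frac{\G(a)}{\sqrt{\pi}}\left[4^{a}\,\G\!\left(a + \tfrac{1}{2}\right) - \sqrt{\pi}\,\G(a + 1)\right].
\end{align*}
Since $\G(a) > 0$ for $a > 0$, the sign of the left-hand side coincides with the sign of the bracket, and by Lemma~\ref{lem:1.6} the bracket is strictly positive for every $a > 0$. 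This would complete the proof.

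The main obstacle is therefore not in this reduction---it is a two-line computation once the duplication formula is on the table---but is entirely deferred to Lemma~\ref{lem:1.6}. The delicacy hides near $a \to 0^{+}$: there both $2\G(2a)$ and $a\G(a)^{2}$ behave like $1/a$, so their difference is a cancellation of leading terms and one genuinely needs the strict inequality $4^{a}\G(a + \tfrac{1}{2}) > \sqrt{\pi}\,\G(a + 1)$ rather than any crude bound. A direct Beta-function attack---writing $\G(a)^{2}/\G(2a) = B(a,a) = \int_{0}^{1}(t(1-t))^{a-1}\,dt$ and trying to show $B(a,a) < 2/a$---runs into exactly this boundary behaviour, which is why routing through Lemma~\ref{lem:1.6} is the cleaner path and why the substantive work is isolated in that lemma rather than here.
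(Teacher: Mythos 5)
Your proposal is correct and matches the paper's own proof essentially verbatim: the paper likewise applies Legendre's duplication formula, rewrites $a\G(a)^2$ as $\G(a)\G(a+1)$, factors out $\G(a)/\sqrt{\pi}$, and concludes from Lemma~\ref{lem:1.6}. No differences worth noting.
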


Next, to prove Lemma~$\ref{lem:4-1-3}$, we use the following:
\begin{lem}[Section~$\ref{S1}$, Lemma~$\ref{lem:1.6}$]\label{lem:4-1-1}
For $a > 0$, we have 
\begin{align*}
4^{a} \G\left(a + \frac{1}{2} \right) > \sqrt{\pi} \G(a + 1). 
\end{align*}
\end{lem}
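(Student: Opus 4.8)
The plan is to deduce the inequality from the strict monotonicity of an auxiliary function. Set $\phi(a):=a\ln 4+\ln\G(a+\tfrac12)-\ln\G(a+1)$ for $a\ge 0$; since $\G(\tfrac12)=\sqrt{\pi}$ and $\G(1)=1$ we have $\phi(0)=\tfrac12\ln\pi$, and the desired estimate $4^{a}\G(a+\tfrac12)>\sqrt{\pi}\,\G(a+1)$ is exactly the assertion $\phi(a)>\phi(0)$ for $a>0$. Because $\G$ is smooth and positive on $(0,\infty)$, $\phi$ is continuously differentiable, so it suffices to show $\phi'(a)>0$ for every $a>0$ and then integrate. First I would compute $\phi'(a)=\ln 4+\psi(a+\tfrac12)-\psi(a+1)$, where $\psi=\G'/\G$ is the digamma function; thus everything reduces to the single inequality $\psi(a+1)-\psi(a+\tfrac12)<\ln 4$ for $a>0$.

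Next I would evaluate this digamma difference explicitly. Using the standard representation $\psi(x)-\psi(y)=\int_{0}^{1}\frac{t^{y-1}-t^{x-1}}{1-t}\,dt$ with $x=a+1$ and $y=a+\tfrac12$, the difference becomes $\int_{0}^{1}\frac{t^{a-1/2}-t^{a}}{1-t}\,dt$; the algebraic identity $\frac{1-\sqrt{t}}{1-t}=\frac{1}{1+\sqrt{t}}$ followed by the substitution $t=u^{2}$ brings it to the clean form $\psi(a+1)-\psi(a+\tfrac12)=2\int_{0}^{1}\frac{u^{2a}}{1+u}\,du$. From this the conclusion is immediate: for $u\in(0,1)$ and $a>0$ one has $u^{2a}<1$, hence $2\int_{0}^{1}\frac{u^{2a}}{1+u}\,du<2\int_{0}^{1}\frac{du}{1+u}=2\ln 2=\ln 4$, so $\phi'(a)>0$. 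Integrating this strict inequality over $[0,a]$ gives $\phi(a)>\phi(0)$, which is precisely the claimed inequality, with equality only in the limit $a\to 0$.

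The step I expect to be the real crux is the exact endpoint evaluation $2\int_{0}^{1}\frac{du}{1+u}=\ln 4$: it is this identity that pins the turning value at $a=0$ and forces strictness for all $a>0$ (equivalently, it is the statement that $\psi(1)$ and $\psi(\tfrac12)$ differ by exactly $2\ln 2$). As a more self-contained alternative avoiding $\psi$, I would instead use the Beta-integral identity $\frac{\sqrt{\pi}\,\G(a+\tfrac12)}{\G(a+1)}=B(a+\tfrac12,\tfrac12)=2\int_{0}^{\pi/2}\sin^{2a}\theta\,d\theta$, which recasts the target as $\frac{2\cdot 4^{a}}{\pi}\int_{0}^{\pi/2}\sin^{2a}\theta\,d\theta>1$ and yields the same monotonicity upon differentiating in $a$. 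Finally, as a consistency check on the target, the Legendre duplication formula $\G(a)\G(a+\tfrac12)=2^{1-2a}\sqrt{\pi}\,\G(2a)$ gives $2\G(2a)-a\G(a)^{2}=\frac{\G(a)}{\sqrt{\pi}}\bigl(4^{a}\G(a+\tfrac12)-\sqrt{\pi}\,\G(a+1)\bigr)$, so the present lemma is equivalent to, and directly implies, Lemma~\ref{lem:4-1-3}, matching the logical order of the paper.
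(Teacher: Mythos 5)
Your proof is correct, and it follows the same overall strategy as the paper — pass to the logarithm of $g(a)=4^{a}\G(a+\tfrac12)/(\sqrt{\pi}\,\G(a+1))$, show the derivative $\ln 4+\psi(a+\tfrac12)-\psi(a+1)$ is positive for $a>0$, and conclude from the value at $a=0$ — but you implement the key estimate differently. The paper uses the series representation $\psi(x)=-\g_{0}+\sum_{n\ge1}\bigl(\tfrac1n-\tfrac{1}{x+n-1}\bigr)$, writes the derivative as $2\log 2-\tfrac12\sum_{n\ge1}\frac{1}{(a+n)(a+n-\frac12)}$, bounds the sum termwise by its value at $a=0$, and then needs a separate lemma (Lemma~\ref{lem:4-1-2}) to evaluate $\sum_{n\ge1}\frac{1}{n(2n-1)}=2\log 2$ via a Riemann-sum computation. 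You instead use the integral representation $\psi(x)-\psi(y)=\int_{0}^{1}\frac{t^{y-1}-t^{x-1}}{1-t}\,dt$, reduce the digamma difference to $2\int_{0}^{1}\frac{u^{2a}}{1+u}\,du$, and bound the integrand pointwise by $1$; the endpoint evaluation $2\int_{0}^{1}\frac{du}{1+u}=\ln 4$ is then immediate, so the auxiliary series lemma is not needed. The two bounds are really the same monotonicity statement for $\psi(a+1)-\psi(a+\tfrac12)$ seen through different representations; your version is somewhat more economical, while the paper's stays entirely within elementary series manipulations. Your closing observation that, via the duplication formula, this lemma is equivalent to $2\G(2a)-a\G(a)^{2}>0$ is exactly how the paper deduces Lemma~\ref{lem:4-1-3}.
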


Furthermore, to prove Lemma~$\ref{lem:4-1-1}$, we need another lemma: 
\begin{lem}\label{lem:4-1-2}
We have 
\begin{align*}
\sum_{n = 1}^{\infty} \frac{1}{n (2n - 1)} = 2 \log{2}. 
\end{align*}
\end{lem}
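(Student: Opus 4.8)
The plan is to evaluate the series by partial-fraction decomposition and then to recognize the resulting partial sums as a classical limit. First I would decompose the general term,
\begin{align*}
\frac{1}{n(2n-1)} = \frac{2}{2n-1} - \frac{1}{n}.
\end{align*}
The original series converges (its terms are $O(1/n^2)$), but the two pieces on the right individually generate divergent series, so I cannot naively split the sum. Instead I would work throughout with the finite partial sums
\begin{align*}
S_N := \sum_{n=1}^{N} \frac{1}{n(2n-1)} = 2 \sum_{n=1}^{N} \frac{1}{2n-1} - \sum_{n=1}^{N} \frac{1}{n},
\end{align*}
which is a legitimate rearrangement for each fixed $N$.

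Next I would express $S_N$ through the harmonic numbers $H_m := \sum_{k=1}^{m} 1/k$. Separating $H_{2N}$ into its odd- and even-indexed terms gives $H_{2N} = \sum_{n=1}^{N} \frac{1}{2n-1} + \frac{1}{2} H_N$, so that $\sum_{n=1}^{N} \frac{1}{2n-1} = H_{2N} - \frac{1}{2} H_N$. Substituting this into the expression for $S_N$ collapses everything to
\begin{align*}
S_N = 2\left( H_{2N} - \tfrac{1}{2} H_N \right) - H_N = 2\left( H_{2N} - H_N \right).
\end{align*}

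Finally I would pass to the limit. The quantity $H_{2N} - H_N = \sum_{k=N+1}^{2N} 1/k$ is a standard object tending to $\log 2$; I would justify this either from the asymptotic $H_N = \log N + \gamma + o(1)$, whence $H_{2N} - H_N = \log 2 + o(1)$, or by identifying it with the partial sum $\sum_{k=1}^{2N} (-1)^{k-1}/k$ of the alternating harmonic series, whose sum is $\log 2$. Either way $S_N \to 2\log 2$, which is the claim. I do not anticipate a genuine obstacle here; the only point demanding care is the one already flagged, namely that the convergent series must not be split into the two divergent series $\sum \frac{2}{2n-1}$ and $\sum \frac{1}{n}$. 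The entire argument is therefore conducted at the level of partial sums, and only the final, convergent combination $2(H_{2N} - H_N)$ is sent to its limit.
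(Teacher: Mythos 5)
Your argument is correct and follows essentially the same route as the paper: the same partial-fraction decomposition, the same reduction of the partial sum to $2\sum_{k=N+1}^{2N}\frac{1}{k} = 2(H_{2N}-H_N)$, with proper care taken never to split the series into its two divergent halves. The only (cosmetic) difference is the final limit: the paper evaluates $\lim_{N\to\infty}\sum_{k=N+1}^{2N}\frac{1}{k}$ as a Riemann sum for $\int_0^1\frac{dx}{1+x}$, whereas you invoke the asymptotic $H_N=\log N+\gamma_0+o(1)$ or the alternating harmonic series; all three justifications are standard and equally valid.
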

\begin{proof}
Let $S_{n} := \sum_{k = 1}^{n} \frac{1}{k (2k - 1)}$. 
Accordingly, we have 
\begin{align*}
S_{n} 
&= \sum_{k = 1}^{n} \left(\frac{2}{2k - 1} - \frac{1}{k} \right) \allowdisplaybreaks \\
&= 2 \sum_{k = 1}^{n} \frac{1}{2k - 1} - \sum_{k = 1}^{n} \frac{1}{k} \allowdisplaybreaks \\
&= 2 \sum_{k = 1}^{n} \frac{1}{2k - 1} 
+ \left(2 \sum_{k = 1}^{n} \frac{1}{2 k} - 2 \sum_{k = 1}^{n} \frac{1}{2 k} \right) 
- \sum_{k = 1}^{n} \frac{1}{k} \allowdisplaybreaks \\
&= 2 \left(\sum_{k = 1}^{n} \frac{1}{2 k - 1}  + \sum_{k = 1}^{n} \frac{1}{2 k} \right) 
- 2 \sum_{k = 1}^{n} \frac{1}{k} \allowdisplaybreaks \\
&= 2 \sum_{k = 1}^{2n} \frac{1}{k} - 2 \sum_{k = 1}^{n} \frac{1}{k} \allowdisplaybreaks \\
&= 2 \sum_{k = n + 1}^{2n} \frac{1}{k} \allowdisplaybreaks \\
&= 2 \sum_{k = 1}^{n} \frac{1}{k + n}. 
\end{align*}
Therefore, we find 
\begin{align*}
\lim_{n \rightarrow \infty} S_{n} 
&= 2 \lim_{n \to \infty} \sum_{k = 1}^{n} \frac{1}{k + n} \allowdisplaybreaks \\
&= 2 \lim_{n \to \infty} \frac{1}{n} \sum_{k = 1}^{n} \frac{1}{1 + \frac{k}{n}} 
\allowdisplaybreaks \\
&= \int_{0}^{1} \frac{1}{1 + x} dx \allowdisplaybreaks \\
&= 2\log{2}.
\end{align*}
The lemma is thus proved. 
\end{proof}

Now we can prove Lemma~$\ref{lem:4-1-1}$. 
\begin{proof}[Proof of Lemma~$\ref{lem:4-1-1}$]
Let 
\begin{align*}
g(a) := \frac{4^{a} \G\left(a + \frac{1}{2} \right)}{\sqrt{\pi} \G(a + 1)}. 
\end{align*}
To prove $g(a) > 1$ for $a > 0$, 
we use the following formula \cite[p.13, Theorem~1.2.5]{andrews_askey_roy_1999}: 
\begin{align*}
\frac{d}{dx} \log{\G(x)} 
= \frac{\G^{'}(x)}{\G(x)} 
= - \g_{0} + \sum_{n = 1}^{\infty} \left(\frac{1}{n} - \frac{1}{x + n - 1} \right), 
\end{align*}
where $\g_{0}$ is Euler's constant given by 
\begin{align*}
\g_{0} := \lim_{n \to \infty} \left(\sum_{k = 1}^{n} \frac{1}{k} - \log{n} \right). 
\end{align*}
Taking the logarithmic derivative of $g(a)$, from the above formula, we have 
\begin{align*}
\frac{d}{da} \log{g(a)} 
&= 2\log{2} + \frac{d}{da} \log{\G\left(a + \frac{1}{2} \right)} 
- \frac{d}{da} \log{\G(a + 1)} \allowdisplaybreaks \\
&= 2\log{2} + \sum_{n = 1}^{\infty} \left(\frac{1}{n} - \frac{1}{a - \frac{1}{2} + n} \right) 
- \sum_{n = 1}^{\infty} \left(\frac{1}{n} - \frac{1}{a + n} \right) \allowdisplaybreaks \\
&= 2\log{2} - \frac{1}{2} \sum_{n = 1}^{\infty} \frac{1}{(a + n) \left(a - \frac{1}{2} + n \right)} 
\allowdisplaybreaks \\
&> 2\log{2} - \frac{1}{2} \sum_{n = 1}^{\infty} \frac{1}{n \left(n - \frac{1}{2} \right)} 
\allowdisplaybreaks \\
&= 2\log{2} - \sum_{n = 1}^{\infty} \frac{1}{n (2n - 1)}
\end{align*}
for $a > 0$. 
Moreover, using Lemma~$\ref{lem:4-1-2}$, we obtain $\frac{d}{da} \log{g(a)} > 0$ for $a > 0$. 
This leads to $\frac{d}{da} g(a) > 0$ for $a > 0$. 
The lemma follows from this and $g(0) = 1$.  
\end{proof}

Now, we can prove Lemma~$\ref{lem:4-1-3}$. 
\begin{proof}[Proof of Lemma~$\ref{lem:4-1-3}$]
We use the following formula \cite[p.22, Theorem~6.5.1]{andrews_askey_roy_1999}:
\begin{align*}
\G(2a) = \frac{2^{2a - 1}}{\sqrt{\pi}} \G(a) \G\left(a + \frac{1}{2} \right). 
\end{align*}
From this and Lemma~$\ref{lem:4-1-1}$, we have 
\begin{align*}
2 \G(2a) - a \G(a)^{2} 
&= \frac{2^{2a}}{\sqrt{\pi}} \G(a) \G\left(a + \frac{1}{2} \right) 
- \G(a) \G(a + 1) \allowdisplaybreaks \\
&= \frac{1}{\sqrt{\pi}} \G(a) 
\left\{4^{a} \G\left(a + \frac{1}{2} \right) - \sqrt{\pi} \G(a + 1) \right\} \\
&> 0. 
\end{align*}
The lemma is thus proved. 
\end{proof}

\subsection{Inequalities for the incomplete gamma functions}
We will prove the following lemma: 
\begin{lem}[Section~$4$, Lemma~$\ref{lem:3.5}$]\label{lem:4-2-1}
For $a > 0$ and $x > 0$, we have 
\begin{align*}
x^{a} \g(a, x)^{2} - x^{a} \G(a)^{2} + 2 \g(a, x) \G(2a, x) > 0. 
\end{align*}
\end{lem}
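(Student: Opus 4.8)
The plan is to fix $a > 0$ and study the left-hand side as a function of $x$,
$$\varphi(x) := x^{a} \g(a,x)^{2} - x^{a} \G(a)^{2} + 2 \g(a,x) \G(2a,x), \qquad x > 0.$$
Both endpoints are transparent: since $\g(a,x) \to 0$ and $x^{a} \to 0$ as $x \to 0^{+}$ we get $\varphi(0^{+}) = 0$, while as $x \to \infty$ the identity $\G(a) - \g(a,x) = \G(a,x)$ together with $\G(a,x) \sim x^{a-1}e^{-x}$ and $\G(2a,x) \sim x^{2a-1}e^{-x}$ shows $\varphi(x) \to 0$. So I would prove $\varphi > 0$ on $(0,\infty)$ by showing that $\varphi$ is \emph{unimodal} --- strictly increasing and then strictly decreasing --- whence it must lie strictly above the common endpoint value $0$.

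First I would differentiate. Using $\frac{d}{dx}\g(a,x) = x^{a-1}e^{-x}$ and $\frac{d}{dx}\G(2a,x) = -x^{2a-1}e^{-x}$, the two terms carrying $x^{2a-1}e^{-x}\g(a,x)$ cancel and one is left with
$$\varphi'(x) = x^{a-1}\bigl( a\,\g(a,x)^{2} - a\,\G(a)^{2} + 2 e^{-x}\G(2a,x)\bigr) =: x^{a-1} G(x).$$
Since $x^{a-1} > 0$, the sign of $\varphi'$ is the sign of $G$, and the whole argument reduces to showing that $G$ changes sign exactly once, from $+$ to $-$. Here the earlier lemmas enter: $G(0) = 2\G(2a) - a\G(a)^{2} > 0$ is precisely Lemma~\ref{lem:4-1-3}, while the asymptotics above give $G(x) \sim -2a\G(a)\,x^{a-1}e^{-x} < 0$, i.e. $G \to 0^{-}$. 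Thus $G$ is positive at $0$ and negative for large $x$, so it changes sign at least once; the content is that it does so only once.

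To pin down the single crossing I would differentiate once more. Using the recurrence $a\g(a,x) = \g(a+1,x) + x^{a}e^{-x}$ the middle derivative simplifies to
$$G'(x) = 2 e^{-x}\bigl( x^{a-1}\g(a+1,x) - \G(2a,x)\bigr) =: 2 e^{-x} H(x),$$
and a further differentiation gives $H'(x) = x^{a-2} P(x)$ with $P(x) := (a-1)\g(a+1,x) + 2 x^{a+1}e^{-x}$, which satisfies $P(0) = 0$ and the clean formula $P'(x) = x^{a}e^{-x}(3a+1-2x)$. Hence $P$ rises from $0$ and then falls, so $P$ --- and therefore $H'$ --- changes sign at most once, from $+$ to $-$; equivalently $H$ is unimodal. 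Combined with $H(0) = -\G(2a) < 0$ and $H(x) \sim \G(a+1)\,x^{a-1} > 0$ as $x \to \infty$ (the polynomial term beating the exponentially small $\G(2a,x)$ for every $a > 0$), unimodality forces $H$ to have exactly one zero, crossing from $-$ to $+$. Consequently $G' = 2e^{-x}H$ is first negative then positive, so $G$ has a unique interior minimum; together with $G(0) > 0$ and $G \to 0^{-}$ this yields exactly one sign change of $G$, from $+$ to $-$. Feeding this back, $\varphi' = x^{a-1}G$ is positive then negative, so $\varphi$ increases to a single maximum and then decreases; as $\varphi(0^{+}) = \varphi(\infty) = 0$, it stays strictly positive on $(0,\infty)$, which is the claim.

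The main obstacle is the uniform treatment of the sign of $G$ --- equivalently the unimodality of $G$ --- across all $a > 0$: the derivative $H'$ is not sign-definite when $0 < a < 1$ (the coefficient $a-1$ is negative), so a direct ``$\varphi' > 0$'' argument is impossible and in fact false, and one cannot simply show $G \geq 0$. The device that rescues the argument is passing to $P$, whose derivative $x^{a}e^{-x}(3a+1-2x)$ is sign-definite on each side of $x = (3a+1)/2$ for every $a$; this is what makes the ``at most one sign change'' structure propagate cleanly up the chain $P \Rightarrow H \Rightarrow G \Rightarrow \varphi$. The only remaining care is bookkeeping the one-sided limits $G \to 0^{-}$ and $H \to 0^{+}$ so that the unique crossings acquire the correct orientation.
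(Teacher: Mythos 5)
Your proof is correct and follows essentially the same route as the paper: your chain $\varphi, G, H, P$ coincides exactly with the paper's auxiliary functions $y_{1}, y_{2}, y_{3}, y_{4}$ (your $H$ and $P$ are just $y_{3}$ and $y_{4}$ rewritten via the recurrence $\g(a+1,x) = a\,\g(a,x) - x^{a}e^{-x}$), and you invoke the same key inequality $2\G(2a) - a\G(a)^{2} > 0$ together with the same endpoint limits. The only difference is presentational: the paper records the sign propagation in case-split tables ($0<a<1$ versus $a\geq 1$), whereas you phrase it as unimodality propagating up the chain.
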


To prove Lemma~$\ref{lem:4-2-1}$, we need to prove two other lemmas: 
\begin{lem}\label{lem:4-2-2}
For $a > 0$ and $x \geq 0$, we have 
\begin{align*}
a \g(a, x) \geq x^{a} e^{-x}. 
\end{align*}
\end{lem}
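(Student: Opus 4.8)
The plan is to reduce the inequality to a statement about the monotonicity of a single auxiliary function. First I would set
\[
h(x) := a \g(a, x) - x^{a} e^{-x}, \qquad x \geq 0,
\]
and check the boundary value: since $\g(a, 0) = \int_{0}^{0} t^{a-1} e^{-t}\, dt = 0$ and $0^{a} = 0$ for $a > 0$, we get $h(0) = 0$. It therefore suffices to show that $h$ is nondecreasing on $[0, \infty)$.

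The key step is to differentiate $h$ and observe a clean cancellation. Using property $(4)$ of Lemma~\ref{lem:3.0}, namely $\frac{d}{dx} \g(a, x) = x^{a-1} e^{-x}$, the first term contributes $a x^{a-1} e^{-x}$. The product rule on the second term gives $\frac{d}{dx}\left( x^{a} e^{-x} \right) = a x^{a-1} e^{-x} - x^{a} e^{-x}$. Subtracting, the two copies of $a x^{a-1} e^{-x}$ cancel and I expect
\[
\frac{d}{dx} h(x) = a x^{a-1} e^{-x} - \left( a x^{a-1} e^{-x} - x^{a} e^{-x} \right) = x^{a} e^{-x}.
\]
Since $x^{a} e^{-x} \geq 0$ for all $x \geq 0$ (and is strictly positive for $x > 0$), the function $h$ is nondecreasing.

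Combining $h(0) = 0$ with $h' \geq 0$ yields $h(x) \geq 0$ for every $x \geq 0$, which is exactly $a \g(a, x) \geq x^{a} e^{-x}$. There is no real obstacle here: the entire argument rests on the derivative cancellation above, and the only point requiring a moment's care is confirming the boundary behaviour at $x = 0$ for the regime $0 < a < 1$, where $x^{a-1}$ blows up but $x^{a} e^{-x}$ and $\g(a,x)$ still vanish, so the stated inequality and the value $h(0) = 0$ remain valid.
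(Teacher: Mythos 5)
Your proof is correct and coincides with the paper's own argument: both define $u(a,x) = a\g(a,x) - x^{a}e^{-x}$, observe the cancellation giving $\frac{d}{dx}u(a,x) = x^{a}e^{-x} \geq 0$, and conclude from $u(a,0)=0$. Your added remark about the boundary behaviour for $0<a<1$ is a harmless extra check that the paper leaves implicit.
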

\begin{proof}
For $a > 0$ and $x \geq 0$, we define 
\begin{align*}
u(a, x) := a \g(a, x) - x^{a} e^{-x}. 
\end{align*}
Then, we have 
\begin{align*}
\frac{d}{dx} u(a, x) = x^{a} e^{-x} \geq 0. 
\end{align*}
The lemma follows from this and $u(a, 0) = 0$. 
\end{proof}

\begin{lem}\label{lem:4-2-3}
For $a > 0$ and $b \in \mathbb{R}$, we have 
\begin{align*}
\lim_{x \to +\infty} x^{b} \G(a, x) = 0. 
\end{align*}
\end{lem}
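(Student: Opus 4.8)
The plan is to exploit the fact that the upper incomplete gamma function $\G(a,x)$ decays \emph{exponentially} in $x$, and that exponential decay overwhelms any polynomial factor $x^{b}$, regardless of the sign of $b$. The only content of the statement is the case $b > 0$ (for $b \leq 0$ the factor $x^{b}$ is bounded for $x \geq 1$ while $\G(a,x) \to 0$ by Lemma~\ref{lem:3.0}); nevertheless, the exponential bound I intend to use treats all real $b$ uniformly, so there is no need to split into cases.

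The key step is an elementary majorization of the integrand. First I would observe that for $t \geq x$ one has $e^{-t} = e^{-t/2} e^{-t/2} \leq e^{-x/2} e^{-t/2}$, since $e^{-t/2} \leq e^{-x/2}$ on this range. Pulling the constant $e^{-x/2}$ out of the integral and then enlarging the domain of integration from $[x,\infty)$ to $[0,\infty)$ (legitimate because the integrand is nonnegative and, for $0 < a < 1$, the singularity of $t^{a-1}$ at the origin is integrable), I obtain
\[
\G(a,x) = \int_{x}^{+\infty} t^{a-1} e^{-t}\,dt
\leq e^{-x/2} \int_{0}^{+\infty} t^{a-1} e^{-t/2}\,dt
= 2^{a}\, \G(a)\, e^{-x/2},
\]
where the final equality follows from the substitution $u = t/2$. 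Multiplying through by $x^{b}$ then gives $0 \leq x^{b}\,\G(a,x) \leq 2^{a}\,\G(a)\, x^{b} e^{-x/2}$ for all $x > 0$.

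To conclude, I would invoke the standard fact that $x^{b} e^{-x/2} \to 0$ as $x \to +\infty$ for every fixed real $b$ (for instance, by taking logarithms, $b \log x - x/2 \to -\infty$), so the right-hand side tends to $0$; the squeeze theorem then yields the claim. I do not anticipate any real obstacle here, as the argument is entirely elementary. The only point requiring a little care is ensuring the dominating integral $\int_{0}^{\infty} t^{a-1} e^{-t/2}\,dt$ is finite for all $a > 0$ (including $0 < a < 1$), which is exactly the convergence condition $\mathrm{Re}(a) > 0$ built into the definition of $\G(a)$; once that is noted, the bound and the limit follow immediately.
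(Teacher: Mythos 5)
Your proof is correct, but it takes a genuinely different route from the paper's. The paper splits into the cases $b \leq 0$ (immediate) and $b > 0$, and for the latter applies L'H\^opital's rule to $\G(a,x)/x^{-b}$, using the derivative formula $\frac{d}{dx}\G(a,x) = -x^{a-1}e^{-x}$ to reduce the claim to $\lim_{x\to+\infty} x^{a+b}e^{-x}/b = 0$. You instead establish the explicit, uniform exponential bound
\begin{align*}
0 \leq \G(a,x) \leq 2^{a}\,\G(a)\,e^{-x/2} \qquad (x > 0),
\end{align*}
by writing $e^{-t} \leq e^{-x/2}e^{-t/2}$ on $[x,\infty)$ and enlarging the domain of integration, and then conclude by the squeeze theorem; this handles all real $b$ at once with no case split. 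Both arguments ultimately rest on the same elementary fact that a power of $x$ times a decaying exponential tends to zero, but your version buys something extra: a quantitative decay estimate for $\G(a,x)$ valid for every $x > 0$, rather than only the limit statement, and it avoids the (mild) need to verify the hypotheses of L'H\^opital's rule. The paper's version is shorter on the page because it can cite the derivative identity already recorded in Lemma~\ref{lem:3.0}. Your one point of care --- integrability of $t^{a-1}e^{-t/2}$ near $0$ for $0 < a < 1$ --- is correctly identified and correctly dispatched.
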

\begin{proof}
When $b \leq 0$, it is easily obtained from the definition of $\G(a, x)$. 
When $b > 0$, using the L'H\^opital's rule, we obtain 
\begin{align*}
\lim_{x \to +\infty} \frac{\G(a, x)}{x^{-b}} 
&= \lim_{x \to +\infty} \frac{x^{a - 1} e^{-x}}{b x^{- b - 1}} \allowdisplaybreaks \\
&= \lim_{x \to +\infty} \frac{x^{a + b} e^{-x}}{b} \\
&= 0. 
\end{align*}
\end{proof}

Now, we can prove Lemma~$\ref{lem:4-2-1}$. 
\begin{proof}[Proof of Lemma~$\ref{lem:4-2-1}$]
For $a > 0$ and $x \geq 0$, we define 
\begin{align*}
y_{1} (a, x) := x^{a} \g(a, x)^{2} - x^{a} \G(a)^{2} + 2 \g(a, x) \G(2a, x). 
\end{align*}
Let us prove $y_{1} (a, x) > 0$ ($a > 0$, $x > 0$). 
For $a > 0$ and $x \geq 0$, we define 
\begin{align*}
y_{2} (a, x) &:= a \g(a, x)^{2} - a \G(a)^{2} + 2 e^{-x} \G(2a, x); \\
y_{3} (a, x) &:= a x^{a - 1} \g(a, x) - \G(2a, x) - x^{2a - 1} e^{-x}; \\
y_{4} (a, x) &:= a (a - 1) \g(a, x) + x^{a} e^{-x} (2x + 1 - a). 
\end{align*}
Then, we have 
\begin{align*}
\frac{d}{dx} y_{1} (a, x) &= x^{a - 1} y_{2} (a, x); \\
\frac{d}{dx} y_{2} (a, x) &= 2 e^{-x} y_{3} (a, x); \\
\frac{d}{dx} y_{3} (a, x) &= x^{a - 2} y_{4} (a, x); \\
\frac{d}{dx} y_{4} (a, x) &= x^{a} e^{-x} (3a + 1 - 2x). 
\end{align*}
From these relations, we find that 
the (positive or negative) signs of $\frac{d}{dx} y_{i}(a, x)$ and $y_{i + 1}(a, x)$ ($i = 1, 2, 3$) 
are equal to each other for $a > 0$ and $x > 0$. 
Let $p_{i} (a)$ ($i = 2, 3, 4$) be the value of $x$ satisfying $y_{i}(a, x) = 0$. 
It is easily verified that $\lim_{x \to 0+} \frac{d}{dx}y_{4}(a, x) = \lim_{x \to +\infty}\frac{d}{dx}y_{4}(a, x) = \lim_{x \to 0+} y_{4}(a, x) = 0$ and $\lim_{x \to +\infty}y_{4}(a, x) = a (a - 1) \G(a)$ for $a > 0$.  
Therefore, from the first derivative test, we obtain Tables~$1$ and $2$. 
Moreover, using Lemmas~$\ref{lem:4-2-2}$, $\ref{lem:4-2-3}$, and L'H\^opital's rule, 
we obtain 
\begin{align*}
&\lim_{x \to 0+} \frac{d}{dx} y_{3}(a, x) 
= 
\begin{cases}
\infty & (0 < a < 1), \\
0 & (a \geq 1),
\end{cases}
& 
&\lim_{x \to +\infty} \frac{d}{dx} y_{3}(a, x) 
= 
\begin{cases}
0 & (0 < a < 2), \\
2 & (a = 2), \\
\infty & (a > 2), 
\end{cases} \allowdisplaybreaks \\
&\lim_{x \to 0+} y_{3}(a, x) 
= - \G(2a) \quad (a > 0), 
& 
&\lim_{x \to +\infty} y_{3}(a, x) 
= 
\begin{cases}
0 & (0 < a < 1), \\
1 & (a = 1), \\
\infty & (a > 1), 
\end{cases} \allowdisplaybreaks \\
&\lim_{x \to 0+} \frac{d}{dx} y_{2}(a, x) 
= - 2\G(2a) \quad (a > 0), 
& 
&\lim_{x \to +\infty} \frac{d}{dx} y_{2}(a, x) 
= 0 \quad (a > 0), \allowdisplaybreaks \\
&\lim_{x \to 0+} y_{2}(a, x) 
= 2 \G(2a) - a\G(a)^{2} \quad (a > 0), 
&
&\lim_{x \to +\infty} y_{2}(a, x) 
= 0 \quad (a > 0), \allowdisplaybreaks \\
&\lim_{x \to 0+} \frac{d}{dx} y_{1}(a, x) 
= 
\begin{cases}
\infty & (0 < a < 1), \\
1 & (a = 1), \\
0 & (a > 1), 
\end{cases}
& 
&\lim_{x \to +\infty} \frac{d}{dx} y_{1}(a, x) 
= 0 \quad (a > 0), \allowdisplaybreaks \\
&\lim_{x \to 0+} y_{1}(a, x) 
= 0 \quad (a > 0), 
& 
&\lim_{x \to +\infty} y_{1}(a, x) 
= 0 \quad (a > 0). 
\end{align*}
From these results, Lemma~$\ref{lem:4-1-3}$, and the fact that 
the signs of $\frac{d}{dx} y_{i}(a, x)$ and $y_{i + 1}(a, x)$ ($i = 1, 2, 3$) 
are equal to each other for $a > 0$ and $x > 0$, 
we obtain Tables~$3$ and $4$. 
From Tables~$3$ and $4$, we can verify that $y_{1}(a, x) > 0$ holds for $a > 0$ and $x > 0$. 
This completes the proof of the lemma. 
\end{proof}

\begin{table}[htb] 
\begin{center}
\caption{Case of $0 < a < 1$}
\begin{tabular}{|c|c|c|c|c|c|c|c|}
\hline
$x$ & $\;0\;$ &$\cdots$ & $\;\frac{3a + 1}{2}\;$ &$\cdots$ & $\;p_{4}(a)\;$ & 
$\cdots$ & $+\infty$ 
\\ \hline 
$\frac{d}{dx} y_{4}(a, x)$ & $0$ & $+$ & $0$ & $-$ & $-$ & $-$ & $0$ 
\\ \hline 
$y_{4}(a, x)$ & $0$ & $+$ & $+$ & $+$ & $0$ & $-$ & $-$ 
\\ \hline 
\end{tabular}
\end{center}
\end{table}
\begin{table}[htb]
\begin{center}
\caption{Case of $a \geq 1$}
\begin{tabular}{|c|c|c|c|c|c|}
\hline
$x$ & $\;0\;$ &$\cdots$ & $\;\frac{3a + 1}{2}\;$ &$\cdots$ & $\;+\infty\;$ 
\\ \hline 
$\frac{d}{dx} y_{4}(a, x)$ & $0$ & $+$ & $0$ & $-$ & $0$ 
\\ \hline 
$y_{4}(a, x)$ & $0$ & $+$ & $+$ & $+$ & 
$\begin{matrix}0\;\;(a = 1)\\ +\;(a > 1)\end{matrix}$ 
\\ \hline 
\end{tabular}
\end{center}
\end{table}
\begin{table}[htb]
\begin{center}
\caption{Case of $0 < a < 1$}
\begin{tabular}{|c|c|c|c|c|c|c|c|c|c|}
\hline
$x$ & $\;0\;$ & $\cdots$ & $\;p_{2}(a)\;$ & $\cdots$ & $\;p_{3}(a)\;$ 
& $\cdots$ & $\;p_{4}(a)\;$ & $\cdots$ & $\;+\infty\;$ 
\\ \hline 
$\frac{d}{dx} y_{3}(a, x)$ & $+\infty$ & $+$ & $+$ & $+$ & $+$ & $+$ & $0$ & $-$ & $0$ 
\\ \hline 
$y_{3}(a, x)$ & $-$ & $-$ & $-$ & $-$ & $0$ & $+$ & $+$ & $+$ & $0$  
\\ \hline 
$\frac{d}{dx} y_{2}(a, x)$ & $-$ & $-$ & $-$ & $-$ & $0$ 
& $+$ & $+$ & $+$ & $0$ 
\\ \hline 
$y_{2}(a, x)$ & $+$ & $+$ & $0$ & $-$ & $-$ & $-$ & $-$ & $-$ & $0$ 
\\ \hline 
$\frac{d}{dx} y_{1}(a, x)$ & $+$ & $+$ & $0$ 
& $-$ & $-$ & $-$ & $-$ & $-$ & $0$ 
\\ \hline 
$y_{1}(a, x)$ & $0$ & $+$ & $+$ & $+$ & $+$ & $+$ & $+$ & $+$ & $0$ 
\\ \hline 
\end{tabular}
\end{center}
\end{table}
\begin{table}[htb]
\begin{center}
\caption{Case of $a \geq 1$}
\begin{tabular}{|c|c|c|c|c|c|c|c|}
\hline
$x$ & $\;0\;$ & $\cdots$ & $\;p_{2}(a)\;$ & $\cdots$ & $\;p_{3}(a)\;$ 
& $\cdots$ & $\;+\infty\;$ 
\\ \hline 
$\frac{d}{dx} y_{3}(a, x)$ & $0$ & $+$ & $+$ & $+$ & $+$ & $+$ & 
$\begin{matrix}0\;(a < 2)\\ +\;(a = 2)\\ +\infty\;(a > 2)\end{matrix}$ 
\\ \hline 
$y_{3}(a, x)$ & $-$ & $-$ & $-$ & $-$ & $0$ & $+$ & $0$  
\\ \hline 
$\frac{d}{dx} y_{2}(a, x)$ & $-$ & $-$ & $-$ & $-$ & $0$ 
& $+$ & $0$ 
\\ \hline 
$y_{2}(a, x)$ & $+$ & $+$ & $0$ & $-$ & $-$ & $-$ & $0$ 
\\ \hline 
$\frac{d}{dx} y_{1}(a, x)$ & $+$ & $+$ & $0$ 
& $-$ & $-$ & $-$ & $0$ 
\\ \hline 
$y_{1}(a, x)$ & $0$ & $+$ & $+$ & $+$ & $+$ & $+$ & $0$ 
\\ \hline 
\end{tabular}
\end{center}
\end{table}

\section{Calculation of the expected value and the variance of the loss}
Here, 
we calculate the expected value and the variance of the loss $\Pe(Z + c)$ for $c \in \mathbb{R}$.

\subsection{\bf{Expected value of the loss}}
%We calculate the expected value of the loss $\Pe(Z + c)$ for $c \in \mathbb{R}$. 
Here, let us put $\beta := (2 a b \G(a))^{-1}$; then, we have 
\begin{align*}
\Ex[\Pe(Z + c)] 
&= \int_{- \infty}^{+\infty} \Pe(z + c) f_{Z}(z) dz \\ 
&= k_{2} \beta \int_{- \infty}^{- c} (- z - c) 
\exp{\left( - \left\lvert \frac{z}{b} \right\rvert^{\frac{1}{a}} \right)} dz
+ k_{1} \beta \int_{- c}^{+\infty} (z + c) 
\exp{\left( - \left\lvert \frac{z}{b} \right\rvert^{\frac{1}{a}} \right)} dz. 
\end{align*}
Replace $z$ with $b z$ to get 
\begin{align*} 
\Ex[\Pe(Z + c)]
= k_{2} b \beta \int_{- \infty}^{- c / b} (- b z - c) 
\exp{\left( - \lvert z \rvert^{\frac{1}{a}} \right)} dz
+ k_{1} b \beta \int_{- c / b}^{+\infty} (b z + c) 
\exp{\left( - \lvert z \rvert^{\frac{1}{a}} \right)} dz. 
\end{align*}
When $c \geq 0$,  we have 
\begin{align*}
\Ex[\Pe(Z + c)] 
&= k_{2} b \beta 
\int_{- \infty}^{- c / b} (- b z - c) \exp{\left( - (-z)^{\frac{1}{a}} \right)} dz \\
&\quad + k_{1} b \beta 
\int_{- c / b}^{0} (b z + c) \exp{\left( - (-z)^{\frac{1}{a}} \right)} dz
+ k_{1} b \beta \int_{0}^{+\infty} (b z + c) \exp{\left( - z^{\frac{1}{a}} \right)} dz 
\allowdisplaybreaks \\
&= k_{2} b \beta 
\int_{c / b}^{+\infty} (b z - c) \exp{\left( - z^{\frac{1}{a}} \right)} dz \\
&\quad + k_{1} b \beta 
\int_{0}^{c / b} (- b z + c) \exp{\left( - z^{\frac{1}{a}} \right)} dz
+ k_{1} b \beta \int_{0}^{+\infty} (b z + c) \exp{\left( - z^{\frac{1}{a}} \right)} dz 
\allowdisplaybreaks \\
&= (k_{1} + k_{2}) b^{2} \beta 
\int_{c / b}^{+\infty} z \exp{\left( - z^{\frac{1}{a}} \right)} dz \\
&\quad + (k_{1} - k_{2}) b c \beta 
\int_{0}^{+\infty} \exp{\left( - z^{\frac{1}{a}} \right)} dz
+ (k_{1} + k_{2}) b c \beta \int_{0}^{c / b} \exp{\left( - z^{\frac{1}{a}} \right)} dz.  
\end{align*}
When $c < 0$,  we have 
\begin{align*}
\Ex[\Pe(Z + c)] 
&= k_{2} b \beta 
\int_{- \infty}^{0} (- b z - c) \exp{\left( - (-z)^{\frac{1}{a}} \right)} dz 
+ k_{2} b \beta \int_{0}^{- c / b} (- b z - c) \exp{\left( - z^{\frac{1}{a}} \right)} dz \\
&\quad + k_{1} b \beta 
\int_{- c / b}^{+\infty} (b z + c) \exp{\left( - z^{\frac{1}{a}} \right)} dz 
\allowdisplaybreaks \\
&= k_{2} b \beta \int_{0}^{+\infty} (b z - c) \exp{\left( - z^{\frac{1}{a}} \right)} dz 
+ k_{2} b \beta \int_{0}^{- c / b} (- b z - c) \exp{\left( - z^{\frac{1}{a}} \right)} dz \\
&\quad + k_{1} b \beta 
\int_{- c / b}^{+\infty} (b z + c) \exp{\left( - z^{\frac{1}{a}} \right)} dz 
\allowdisplaybreaks \\
&= (k_{1} + k_{2}) b^{2} \beta 
\int_{- c / b}^{+\infty} z \exp{\left( - z^{\frac{1}{a}} \right)} dz \\
&\quad + (k_{1} - k_{2}) b c \beta 
\int_{0}^{+\infty} \exp{\left( - z^{\frac{1}{a}} \right)} dz
- (k_{1} + k_{2}) b c \beta \int_{0}^{- c / b} \exp{\left( - z^{\frac{1}{a}} \right)} dz.  
\end{align*}
From the above, for any $c \in \mathbb{R}$, we have 
\begin{align*}
\Ex[\Pe(Z + c)] 
&= (k_{1} + k_{2}) b^{2} \beta 
\int_{\lvert c  / b \rvert}^{+\infty} z \exp{\left( - z^{\frac{1}{a}} \right)} dz \\
&\quad + (k_{1} - k_{2}) b c \beta 
\int_{0}^{+\infty} \exp{\left( - z^{\frac{1}{a}} \right)} dz
+ (k_{1} + k_{2}) b \lvert c \rvert \beta 
\int_{0}^{\lvert c / b \rvert} \exp{\left( - z^{\frac{1}{a}} \right)} dz.  
\end{align*}
Now set $t := z^{\frac{1}{a}}$ to get 
\begin{align*}
\Ex[\Pe(Z + c)] 
&= (k_{1} + k_{2}) a b^{2} \beta \int_{c'}^{+\infty} t^{2a - 1} e^{-t} dt \\
&\quad + (k_{1} - k_{2}) a b c \beta \int_{0}^{+\infty} t^{a - 1} e^{-t} dt
+ (k_{1} + k_{2}) a b \lvert c \rvert \beta \int_{0}^{c'} t^{a - 1} e^{-t} dt 
\allowdisplaybreaks \\
&= (k_{1} + k_{2}) a b^{2} \beta \G(2a, c') 
+ (k_{1} - k_{2}) a b c \beta \G(a) 
+ (k_{1} + k_{2}) a b \lvert c \rvert \beta \g(a, c'), 
\end{align*}
where $c' := \lvert c / b \rvert^{\frac{1}{a}}$. 
Therefore, for any $c \in \mathbb{R}$, we have 
\begin{align}\label{E[Pe(Z+c)]-2}
\Ex[\Pe(Z + c)] 
= \frac{(k_{1} - k_{2}) c}{2} 
+ \frac{(k_{1} + k_{2}) \lvert c \rvert}{2 \G(a)} 
\g\left(a, \left\lvert \frac{c}{b} \right\rvert^{\frac{1}{a}} \right) 
+ \frac{(k_{1} + k_{2}) b}{2 \G(a)} 
\G\left(2a, \left\lvert \frac{c}{b} \right\rvert^{\frac{1}{a}} \right).    
\end{align}

\subsection{\bf{Variance of the loss}}
Now let us calculate the variance of the loss $\Pe(Z + c)$ for $c \in \mathbb{R}$. 
Put $\beta := (2 a b \G(a))^{-1}$; then, we have 
\begin{align*}
\Ex[\Pe(Z + c)^{2}] 
&= \int_{- \infty}^{+\infty} \Pe(z + c)^{2} f_{Z}(z) dz 
\allowdisplaybreaks \\
&= k_{2}^{2} \beta \int_{- \infty}^{- c} (z + c)^{2} 
\exp{\left( - \left\lvert \frac{z}{b} \right\rvert^{\frac{1}{a}} \right)} dz
+ k_{1}^{2} \beta \int_{- c}^{+\infty} (z + c)^{2} 
\exp{\left( - \left\lvert \frac{z}{b} \right\rvert^{\frac{1}{a}} \right)} dz. 
\end{align*}
Replace $z$ with $b z$ to get
\begin{align*}
\Ex[\Pe(Z + c)^{2}] 
= k_{2}^{2} b \beta \int_{- \infty}^{- c / b} (b z + c)^{2} 
\exp{\left( - \lvert z \rvert^{\frac{1}{a}} \right)} dz
+ k_{1}^{2} b \beta \int_{- c / b}^{+\infty} (b z + c)^{2} 
\exp{\left( - \lvert z \rvert^{\frac{1}{a}} \right)} dz. 
\end{align*}
When $c \geq 0$,  we have 
\begin{align*}
\Ex[\Pe(Z + c)^{2}] 
&= k_{2}^{2} b \beta \int_{- \infty}^{- c / b} (b z + c)^{2} 
\exp{\left( - (- z)^{\frac{1}{a}} \right)} dz \\
&\quad + k_{1}^{2} b \beta \int_{- c / b}^{0} (b z + c)^{2} 
\exp{\left( - (- z)^{\frac{1}{a}} \right)} dz
+ k_{1}^{2} b \beta \int_{0}^{+\infty} (b z + c)^{2} 
\exp{\left( - z^{\frac{1}{a}} \right)} dz \allowdisplaybreaks \\
&= k_{2}^{2} b \beta \int_{c / b}^{+\infty} (- b z + c)^{2} 
\exp{\left( - z^{\frac{1}{a}} \right)} dz \\
&\quad + k_{1}^{2} b \beta \int_{0}^{c / b} (- b z + c)^{2} 
\exp{\left( - z^{\frac{1}{a}} \right)} dz
+ k_{1}^{2} b \beta \int_{0}^{+\infty} (b z + c)^{2} 
\exp{\left( - z^{\frac{1}{a}} \right)} dz \allowdisplaybreaks \\
&= (k_{1}^{2} + k_{2}^{2}) b^{3} \beta 
\int_{0}^{+\infty} z^{2} \exp{\left( - z^{\frac{1}{a}} \right)} dz  
+ (k_{1}^{2} - k_{2}^{2}) b^{3} \beta 
\int_{0}^{c / b} z^{2} \exp{\left( - z^{\frac{1}{a}} \right)} dz \\
&\quad + 2 (k_{1}^{2} - k_{2}^{2}) b^{2} c \beta 
\int_{c / b}^{+\infty} z \exp{\left( - z^{\frac{1}{a}} \right)} dz \\
&\quad + (k_{1}^{2} + k_{2}^{2}) b c^{2} \beta 
\int_{0}^{+\infty} \exp{\left( - z^{\frac{1}{a}} \right)} dz 
+ (k_{1}^{2} - k_{2}^{2}) b c^{2} \beta 
\int_{0}^{c / b} \exp{\left( - z^{\frac{1}{a}} \right)} dz.  
\end{align*}
When $c < 0$,  we have 
\begin{align*}
\Ex[\Pe(Z + c)^{2}] 
&= k_{2}^{2} b \beta 
\int_{- \infty}^{0} (b z + c)^{2} \exp{\left( - (- z)^{\frac{1}{a}} \right)} dz 
+ k_{2}^{2} b \beta 
\int_{0}^{- c / b} (b z + c)^{2} \exp{\left( - z^{\frac{1}{a}} \right)} dz \\
&\quad + k_{1}^{2} b \beta 
\int_{- c / b}^{+\infty} (b z + c)^{2} \exp{\left( - z^{\frac{1}{a}} \right)} dz 
\allowdisplaybreaks \\
&= k_{2}^{2} b \beta 
\int_{0}^{+\infty} (- b z + c)^{2} \exp{\left( - z^{\frac{1}{a}} \right)} dz 
+ k_{2}^{2} b \beta 
\int_{0}^{- c / b} (b z + c)^{2} \exp{\left( - z^{\frac{1}{a}} \right)} dz \\
&\quad + k_{1}^{2} b \beta 
\int_{- c / b}^{+\infty} (b z + c)^{2} \exp{\left( - z^{\frac{1}{a}} \right)} dz 
\allowdisplaybreaks \\
&= (k_{1}^{2} + k_{2}^{2}) b^{3} \beta 
\int_{0}^{+\infty} z^{2} \exp{\left( - z^{\frac{1}{a}} \right)} dz  
- (k_{1}^{2} - k_{2}^{2}) b^{3} \beta 
\int_{0}^{- c / b} z^{2} \exp{\left( - z^{\frac{1}{a}} \right)} dz \\
&\quad + 2 (k_{1}^{2} - k_{2}^{2}) b^{2} c \beta 
\int_{- c / b}^{+\infty} z \exp{\left( - z^{\frac{1}{a}} \right)} dz \\
&\quad + (k_{1}^{2} + k_{2}^{2}) b c^{2} \beta 
\int_{0}^{+\infty} \exp{\left( - z^{\frac{1}{a}} \right)} dz 
- (k_{1}^{2} - k_{2}^{2}) b c^{2} \beta 
\int_{0}^{- c / b} \exp{\left( - z^{\frac{1}{a}} \right)} dz. 
\end{align*}
From the above, for any $c \in \mathbb{R}$, we have 
\begin{align*}
\Ex[\Pe(Z + c)^{2}] 
&= (k_{1}^{2} + k_{2}^{2}) b^{3} \beta 
\int_{0}^{+\infty} z^{2} \exp{\left( - z^{\frac{1}{a}} \right)} dz \\ 
&\quad + \Sgn(c) (k_{1}^{2} - k_{2}^{2}) b^{3} \beta 
\int_{0}^{\lvert c / b \rvert} z^{2} \exp{\left( - z^{\frac{1}{a}} \right)} dz \\
&\quad + 2 (k_{1}^{2} - k_{2}^{2}) b^{2} c \beta 
\int_{\lvert c / b \rvert}^{+\infty} z \exp{\left( - z^{\frac{1}{a}} \right)} dz \\
&\quad + (k_{1}^{2} + k_{2}^{2}) b c^{2} \beta 
\int_{0}^{+\infty} \exp{\left( - z^{\frac{1}{a}} \right)} dz \\ 
&\quad + \Sgn(c) (k_{1}^{2} - k_{2}^{2}) b c^{2} \beta 
\int_{0}^{\lvert c / b \rvert} \exp{\left( - z^{\frac{1}{a}} \right)} dz. 
\end{align*}
Now set $t := z^{\frac{1}{a}}$ to get 
\begin{align*}
\Ex[\Pe(Z + c)^{2}] 
&= (k_{1}^{2} + k_{2}^{2}) a b^{3} \beta \int_{0}^{+\infty} t^{3a - 1} e^{-t} dt 
+ \Sgn(c) (k_{1}^{2} - k_{2}^{2}) a b^{3} \beta \int_{0}^{c'} t^{3a - 1} e^{-t} dt \\
&\quad + 2(k_{1}^{2} - k_{2}^{2}) a b^{2} c \beta \int_{c'}^{+\infty} t^{2a - 1} e^{-t} dt \\
&\quad + (k_{1}^{2} + k_{2}^{2}) a b c^{2} \beta \int_{0}^{+\infty} t^{a - 1} e^{-t} dt 
+ \Sgn(c) (k_{1}^{2} - k_{2}^{2}) a b c^{2} \beta \int_{0}^{c'} t^{a - 1} e^{-t} dt 
\allowdisplaybreaks \\
&= (k_{1}^{2} + k_{2}^{2}) a b^{3} \beta \G(3a) 
+ \Sgn(c) (k_{1}^{2} - k_{2}^{2}) a b^{3} \beta \g(3a, c') \\
&\quad + 2(k_{1}^{2} - k_{2}^{2}) a b^{2} c \beta \G(2a, c') \\
&\quad + (k_{1}^{2} + k_{2}^{2}) a b c^{2} \beta \G(a) 
+ \Sgn(c) (k_{1}^{2} - k_{2}^{2}) a b c^{2} \beta \g(a, c'), 
\end{align*}
where $c' := \lvert c / b \rvert^{\frac{1}{a}}$. 
Therefore, for any $c \in \mathbb{R}$, we have 
\begin{align*}
\Ex[\Pe(Z + c)^{2}] 
&= \frac{(k_{1}^{2} + k_{2}^{2}) c^{2}}{2} 
+ \Sgn(c) \frac{(k_{1}^{2} - k_{2}^{2}) c^{2}}{2 \G(a)} 
\g\left(a, \left\lvert \frac{c}{b} \right\rvert^{\frac{1}{a}} \right) 
+ \frac{(k_{1}^{2} - k_{2}^{2}) b c}{\G(a)} 
\G\left(2a, \left\lvert \frac{c}{b} \right\rvert^{\frac{1}{a}} \right) \\
&\quad + \frac{(k_{1}^{2} + k_{2}^{2}) b^{2} \G(3a)}{2 \G(a)} 
+ \Sgn(c) \frac{(k_{1}^{2} - k_{2}^{2}) b^{2}}{2 \G(a)} 
\g\left(3a, \left\lvert \frac{c}{b} \right\rvert^{\frac{1}{a}} \right).   
\end{align*}
Also, from $(\ref{E[Pe(Z+c)]-2})$, we have 
\begin{align*}
\Ex[\Pe(Z + c)]^{2} 
&= \frac{(k_{1} - k_{2})^{2} c^{2}}{4} 
+ \frac{(k_{1}^{2} - k_{2}^{2}) c \lvert c \rvert}{2 \G(a)}
\g\left(a, \left\lvert \frac{c}{b} \right\rvert^{\frac{1}{a}} \right) 
+ \frac{(k_{1}^{2} - k_{2}^{2}) b c}{2 \G(a)}
\G\left(2a, \left\lvert \frac{c}{b} \right\rvert^{\frac{1}{a}} \right) \\
&\quad + \frac{(k_{1} + k_{2})^{2} b \lvert c \rvert}{2 \G(a)^{2}}
\g\left(a, \left\lvert \frac{c}{b} \right\rvert^{\frac{1}{a}} \right)
\G\left(2a, \left\lvert \frac{c}{b} \right\rvert^{\frac{1}{a}} \right) \\
&\quad + \frac{(k_{1} + k_{2})^{2} c^{2}}{4 \G(a)^{2}}
\g\left(a, \left\lvert \frac{c}{b} \right\rvert^{\frac{1}{a}} \right)^{2} 
+ \frac{(k_{1} + k_{2})^{2} b^{2}}{4 \G(a)^{2}}
\G\left(2a, \left\lvert \frac{c}{b} \right\rvert^{\frac{1}{a}} \right)^{2}. 
\end{align*}
Therefore, for any $c \in \mathbb{R}$, we have 
\begin{align*}
\Va[\Pe(Z + c)] 
&= \Ex[\Pe(Z + c)^{2}] - \Ex[\Pe(Z + c)]^{2} \\
&= \frac{(k_{1} + k_{2})^{2} c^{2}}{4} 
+ \frac{(k_{1}^{2} - k_{2}^{2}) b c}{2 \G(a)} 
\G\left(2a, \left\lvert \frac{c}{b} \right\rvert^{\frac{1}{a}} \right) \nonumber \\
&\quad - \frac{(k_{1} + k_{2})^{2} b \lvert c \rvert}{2 \G(a)^{2}}
\g\left(a, \left\lvert \frac{c}{b} \right\rvert^{\frac{1}{a}} \right)
\G\left(2a, \left\lvert \frac{c}{b} \right\rvert^{\frac{1}{a}} \right) \nonumber \\
&\quad - \frac{(k_{1} + k_{2})^{2} c^{2}}{4 \G(a)^{2}}
\g\left(a, \left\lvert \frac{c}{b} \right\rvert^{\frac{1}{a}} \right)^{2} 
- \frac{(k_{1} + k_{2})^{2} b^{2}}{4 \G(a)^{2}}
\G\left(2a, \left\lvert \frac{c}{b} \right\rvert^{\frac{1}{a}} \right)^{2} \nonumber \\
&\quad + \frac{(k_{1}^{2} + k_{2}^{2}) b^{2} \G(3a)}{2 \G(a)}  
+ \Sgn(c) \frac{(k_{1}^{2} - k_{2}^{2}) b^{2}}{2 \G(a)} 
\g\left(3a, \left\lvert \frac{c}{b} \right\rvert^{\frac{1}{a}} \right). \nonumber
\end{align*}

\clearpage

\bibliography{reference}
\bibliographystyle{plain}

\medskip
\begin{flushleft}
Naoya Yamaguchi\\
Office for Establishment of an Information-related School\\
Nagasaki University\\
1-14 Bunkyo, Nagasaki City 852-8521 \\
Japan\\
yamaguchi@nagasaki-u.ac.jp
\end{flushleft}

\vspace{0.3cm}

\begin{flushleft}
Yuka Yamaguchi\\
Office for Establishment of an Information-related School\\
Nagasaki University\\
1-14 Bunkyo, Nagasaki City 852-8521 \\
Japan\\
yamaguchiyuka@nagasaki-u.ac.jp
\end{flushleft}

\vspace{0.3cm}

\begin{flushleft}
Ryuei Nishi\\
Office for Establishment of an Information-related School\\
Nagasaki University\\
1-14 Bunkyo, Nagasaki City 852-8521 \\
Japan\\
nishii.ryuei@nagasaki-u.ac.jp
\end{flushleft}

%\begin{thebibliography}{99}
%\bibitem{AAR}
%Andrews, George E., Richard Askey, and Ranjan Roy, {\itshape Special functions}, Vol. 71, Cambridge university press (2000).
%\end{thebibliography}

\end{document}